\title{Operators of rank $1$, discrete path integration and graph 
Laplacians} 
\author[Yu.Burman]{Yurii Burman} 
\address{Independent University of Moscow and Higher School of Economics, 
Moscow} 
\email{burman@mccme.ru} 
\date{} 
\newcommand{\theoremName}{Theorem}
\newcommand{\lemmaName}{Lemma}
\newcommand{\corollaryName}{Corollary}
\newcommand{\statementName}{Proposition}
\newcommand{\remarkName}{Remark}
\newcommand{\exampleName}{Example}
\newcommand{\definitionName}{Definition}
\newcommand{\problemName}{Problem}
\newcommand{\proofName}{Proof}
\renewcommand{\proofname}{\proofName}
\newcommand{\answerName}{Answer}
\newcommand{\hintName}{Hint}
\theoremstyle{plain}
\newtheorem {theorem}{\theoremName}
\newtheorem {lemma}[theorem]{\lemmaName}
\newtheorem {corollary}[theorem]{\corollaryName}
\theoremstyle{remark}
\newtheorem{remark}[theorem]{\remarkName}
\newtheorem{Remark}{\remarkName}
\newtheorem{example}[theorem]{\exampleName}
\theoremstyle{definition}
\newtheorem{Definition}{\definitionName}
\let\@newpf\proof 
\let\proof\relax
\def \namepf[#1] {\@newpf[\proofname\ #1]}
\newenvironment{proof}{\@ifnextchar[{\namepf}{\@newpf[\proofname]}}{\qed\endtrivlist}
\newcounter{qst}
\def \Real {{\mathbb R}}
\def \lnorm#1\rnorm {\vphantom{#1}\left\|\smash{#1}\right\|}
\def \lmod#1\rmod {\vphantom{#1}\left|\smash{#1}\right|}
\newcommand \bydef {\stackrel{\mbox{\scriptsize def}}{=}}
\newcommand \eps {\varepsilon}
\renewcommand \phi {\varphi}
\renewcommand \rho {\varrho}
\renewcommand \emptyset {\varnothing}
\begin{document}

 \begin{abstract}
We prove a formula for a characteristic polynomial of an operator expressed 
as a polynomial of rank $1$ operators. The formula uses a discrete analog 
of path integration and implies a generalization of the Forman--Kenyon's 
formula for a determinant of the graph Laplacian \cite{Kenyon,Forman} 
(which, in its turn, implies the famous matrix-tree theorem by Kirchhoff) 
as well as its level $2$ analog, where the summation is performed over 
triangulated nodal surfaces with boundary.
 \end{abstract}

\maketitle

\section*{Introduction}

The primary impulse to write this article was the famous Matrix-tree 
theorem (MTT), first discovered by Kirchhoff \cite{Kirchhoff} in 1847 and 
re-proved more than a dozen times since then. The theorem in its simplest 
form expresses the number of spanning trees in a graph as a determinant of 
a suitably chosen matrix $M$. See \cite{Chaiken} for a review of existing 
proofs and generalizations. One of the proofs, given in \cite{BPT}, makes 
use of the fact that the matrix in question is a weighted sum of special 
rank $1$ matrices (identity minus reflection). This structure of $M$ 
explains why the MTT appeares sometimes in quite unexpected contexts (see 
\cite{BZ} for just an example).

Let $M$ be an operator expressed explicitly as a noncommutative polynomial 
of arbitrary operators $M_i$ of rank $1$. The article contains a formula 
(Theorem \ref{Th:Main}) for the characterstic polynomial of $M$ in terms of 
$M_i$. Corollaries of the formula include the MTT (cf,\ Corollary 
\ref{Cr:MTT}), the $D$-analog of the MTT (\cite{BPT}, cf.\ Corollary 
\ref{Cr:MTTD}), a formula for the determinant of the Laplacian of a line 
bundle on a graph \cite{Forman,Kenyon} (Section \ref{SSSec:Lapl}; actually, 
we compute the whole characteristic polynomial), and a level $2$ analog of 
the formula from \cite{Forman} proved in Sections \ref{SSSec:Level2} and 
\ref{SSec:Level2}. 

The right-hand side of the main formula \eqref{Eq:Main} involve summation 
over the graphs consisting of several cycles and/or several chains. The 
summand (the weight $W_P$, see \eqref{Eq:DefWeight}) is a function on edges 
of the graph obtained also by symmation over the set of paths joining the 
endpoints of the edge. Consequently, corollaries of the main theorem 
involve summation over various objects, including trees (the MTT), 
hypertrees (the Massbaum--Vaintrob theorem), cycle-rooted trees (the 
Forman--Kenyon formula) and nodal surfaces with boundary (the level $2$ 
analog from Section \ref{SSSec:Level2}). The standard expression of a 
determinant via summation over the permutation group also follows from 
Theorem \ref{Th:Main} --- see Section \ref{SSSec:Det}.

\subsubsection*{Structure of the article} Section \ref{SSec:Main} contains 
the formulation and the proof of the main theorem (Theorem \ref{Th:Main}). 
Section \ref{SSec:Ex} lists several corollaries of the theorem. Some 
corollaries are proved immediately, proofs of some others require not some 
additional reasoning, which is given in Section \ref{Sec:Proofs}. Section 
\ref{SSec:Tech} contains technical lemmas (including a duality lemma 
\ref{Lm:OrthComp} for the angle between two subspaces of a Euclidean 
space). Section \ref{SSec:MTree} contains the proof of the generalization 
of Forman's formula (Theorem \ref{Th:Forman}), and Section 
\ref{SSec:Level2} is devoted to the proof of Theorem \ref{Th:DetLevel2} 
about the level $2$ analog of the graph Laplacian.

\subsubsection*{Open questions and future research} Generalizations of the 
matrix-tree theorem are plentiful and versatile (see e.g.\ \cite{Chaiken} 
and the references therein), and Theorem \ref{Th:Main} covers surprisingly 
many of them. Nevertheless, there are numerous results in the field whose 
relations with the discrete path integration technique are yet to be 
clarified. One of such results is the Hyperpfaffian-cactus theorem by 
A.Abdessalam \cite{Abdes}; it is a generalization of the Pfaffian Hypertree 
theorem of \cite{MasVai}. The latter theorem is not included into this 
paper but was proved in \cite{BPT} by a method close to Theorem 
\ref{Th:Main} (the original proof due to G.Massbaum and A.Vaintrob was 
quite different); we were unable, though, to extend this proof to 
hyperpfaffians. Also, some results of the MTT type appear in the theory of 
determinantal point processes, see \cite{BurPem} and \cite{Borodin}. It 
would be interesting to know whether these results are covered by Theorem 
\ref{Th:Main} or by its suitable generalization.

The summation in Theorem \ref{Th:Main} is performed over the set of graphs 
that deserve to be called ``discrete $1$-manifolds'' (oriented, possibly 
with boundary). The summand is obtained by a procedure which is quite 
natural to understand as a ``discrete path integration''. A tempting 
direction of the future research here is to send dimension to infinity, 
getting ``real'' path integration and summation over $2$-varieties. The 
author plans to write a special paper about this. 

\section{Sums of operators of rank $1$}

\subsection{The main theorem}\label{SSec:Main}

Let $V$ be a vector space of dimension $n$ with a scalar product $\langle 
\cdot, \cdot\rangle$. Choose an integer $N$ and fix two sequences of 
vectors, $e_1, \dots, e_N \in V$ and $\alpha_1, \dots, \alpha_N \in V$. For 
any $i$ denote by $M_i$ the operator given by $M_i(v) = \langle 
\alpha_i,v\rangle e_i$; one has $\name{rk} M_i = 1$ or $M_i = 0$. Consider 
an operator 
 \begin{equation}\label{Eq:DefM}
M = P(M_1, \dots, M_N)
 \end{equation}
where
 \begin{equation*}
P(x_1, \dots, x_N) = \sum_{s=1}^k \sum_{1 \le i_1, \dots, i_s \le N} 
p_{i_1, \dots, i_s} x_{i_1} \dots x_{i_s}
 \end{equation*}
is a noncommutative polynomial of degree $k$. This section contains a 
description of the characteristic polynomial $\name{char}_M(t)$ of the 
operator $M$.

Let $G$ be a finite graph with the vertices $1, 2, \dots, N$; let $a$ and 
$b$ be its vertices. Define the weight $W_P(a,b)$ by
 \begin{equation}\label{Eq:DefWeight}
W_P(a,b) = \sum_{s=1}^k \sum_{i_1=a, i_2, \dots, i_s=b} p_{i_1, \dots, i_s} 
\langle \alpha_{i_2}, e_{i_1}\rangle \langle \alpha_{i_3}, e_{i_2}\rangle 
\dots \langle \alpha_{i_s}, e_{i_{s-1}}\rangle,
 \end{equation}
(the internal summation is taken over the set of paths $i_1, \dots, i_s$ of 
length $s$ joining the vertices $a = i_1$ and $b = i_s$). Also, denote
 \begin{equation}\label{Eq:Weight}
W_P(G) \bydef \prod_{\text{\scriptsize $(a,b)$ is an edge of $G$}} W_P(a,b)
 \end{equation}

A directed graph $G$ with the vertices $1, 2, \dots, N$ is called a 
discrete oriented one-dimensional manifold with (possibly empty) boundary 
(abbreviated as DOOMB) if every its connected component is either an 
oriented chain (a graph with $\ell$ distinct vectices $i_1, \dots, 
i_\ell$ and the edges are $(i_1, i_2)$, $(i_2, i_3)$, \dots, $(i_{\ell-1}, 
i_\ell)$) or an oriented cycle (the same edges but the vertices are $i_1, 
\dots, i_{\ell-1}$ and $i_\ell = i_1$).

 \begin{theorem} \label{Th:Main}
$\name{char}_M(t) = \sum_{k=0}^n (-1)^k \mu_k t^{n+1-k}$ where 
 \begin{equation}\label{Eq:Main}
\mu_k = \sum_{G \in \name{\mathcal D}_{n,k}} W_P(G) \det 
(\langle \alpha_{d_1^-}, e_{d_2^+}\rangle).
 \end{equation}
Here $\name{\mathcal D}_{n,k}$ is the set of DOOMBs with the vertices $1, 
2, \dots, n$ and $k$ edges; $d_1$ and $d_2$ run through the set of all 
edges of the graph $G$; $d^-$ and $d^+$ are the initial and the terminal 
vertex of the directed edge $d$. 
 \end{theorem}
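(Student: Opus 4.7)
The plan is to compute $\mu_k=\name{tr}(\Lambda^k M)$ directly in three steps: first express $M$ as a concrete sum of rank $1$ operators indexed by ordered pairs of vertices; then apply a standard Sylvester-type expansion of $\name{tr}(\Lambda^k A)$ for an operator written as a sum of rank $1$'s; then prune the resulting sum by a column/row coincidence argument.

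First I would expand $M$ monomial by monomial. A direct computation identifies each $M_{i_1}M_{i_2}\cdots M_{i_s}$ as a scalar multiple of a single rank $1$ operator of the form $e_b\otimes\alpha_a^{*}$ (the map $v\mapsto\langle\alpha_a,v\rangle e_b$), where $\{a,b\}=\{i_1,i_s\}$ are the endpoints of the monomial ``path'' and the scalar is exactly the product of the $s-1$ consecutive inner products appearing in \eqref{Eq:DefWeight}. Grouping monomials by their pair of endpoints $(a,b)$ then yields the clean decomposition
\[
M=\sum_{a,b=1}^{N} W_P(a,b)\cdot e_b\otimes\alpha_a^{*},
\]
exhibiting $M$ as a sum of at most $N^2$ rank $1$ operators, one per ordered pair of vertices.

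Next I would invoke the identity
\[
\name{tr}(\Lambda^k A)=\sum_{\substack{S\subseteq I\\ |S|=k}}\det\bigl(\langle v_i,u_j\rangle\bigr)_{i,j\in S},
\]
valid for any $A=\sum_{i\in I} u_i\otimes v_i^{*}$; this is an immediate consequence of the Weinstein--Aronszajn identity $\det(I_V+tUV^{*})=\det(I_I+tV^{*}U)$ together with the principal-minor expansion of $\det(I+tB)$. Applied with $I=\{1,\dots,N\}^2$, $u_{(a,b)}=W_P(a,b)\,e_b$, and $v_{(a,b)}=\alpha_a$, and pulling one factor of $W_P$ out of each column of the determinant, it gives
\[
\mu_k=\sum_{G}W_P(G)\cdot\det\bigl(\langle\alpha_{d_1^-},e_{d_2^+}\rangle\bigr)_{d_1,d_2\in E(G)},
\]
where $G$ runs over \emph{all} directed graphs on the vertex set $\{1,\dots,N\}$ with $k$ edges --- not only DOOMBs --- each subset $S$ being reinterpreted as the edge set $E(G)$.

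The remaining step, which I expect to carry the substantive combinatorial content of the theorem, is to show that the summands attached to non-DOOMB graphs vanish. A DOOMB is exactly a directed graph in which every vertex has in-degree and out-degree at most $1$. If $G$ fails to be a DOOMB, then either some vertex $v$ carries two distinct edges $d_1,d_1'$ with $d_1^-=d_1'^-=v$ (out-degree at least $2$), in which case the rows of the endpoint matrix indexed by $d_1$ and $d_1'$ both equal the vector $(\langle\alpha_v,e_{d^+}\rangle)_{d\in E(G)}$; or else two distinct edges satisfy $d_2^+=d_2'^+=v$ (in-degree at least $2$), in which case the analogous columns coincide. Either way the determinant vanishes, so the sum collapses to the advertised sum over DOOMBs.
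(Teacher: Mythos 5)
Your proof is correct and follows essentially the same route as the paper: both compute $\mu_k=\name{Tr}(\Lambda^k M)$ by exploiting the rank-one structure, arrive at the same intermediate sum of $W_P(G)\det(\langle\alpha_{d_1^-},e_{d_2^+}\rangle)$ over all $k$-edge directed graphs, and then discard non-DOOMBs by noting that two edges with a common initial (resp.\ terminal) vertex give equal rows (resp.\ columns) in the endpoint determinant. The only difference is organizational: you pre-aggregate the path sums into the decomposition $M=\sum_{a,b}W_P(a,b)\,e_b\otimes\alpha_a^{*}$ and then invoke the Sylvester/principal-minor identity for $\name{Tr}(\Lambda^k)$ of a sum of rank-one operators, whereas the paper derives the same formula by expanding $M^{\wedge k}$ in an orthonormal wedge basis and applying Cauchy--Binet by hand.
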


 \begin{Remark}
The main theorem of \cite{BPT} is a particular case of Theorem 
\ref{Th:Main}.
 \end{Remark}

 \begin{proof}
Consider an orthonormal basis $u_1, \dots, u_n \in V$ and fix a sequence 
$j_1, \dots, j_k$, $1 \le j_1 < \dots < j_k \le N$. Then
 \begin{align*}
M(u_{j_1} \wedge \dots \wedge u_{j_k}) &= \sum_{s_1, \dots, s_k} 
\!\! \sum_{\begin{array}{c}
\scriptstyle 1 \le i_m^{(q)} \le N\\
\scriptstyle 1 \le m \le s_q, 1 \le q \le k
\end{array}} 
\prod_{q=1}^k \left(p_{i_1^{(q)}, \dots, i_{s_q}^{(q)}} 
\prod_{r=2}^{s_q}\langle \alpha_{i_r^{(q)}}, e_{i_{r-1}^{(q)}}\rangle 
\right) \\
&\hphantom{\sum_{s_1, \dots, s_k}}\times \prod_{q=1}^k \langle 
\alpha_{i_1^{(q)}}, u_{j_q}\rangle \times e_{i_{s_1}^{(1)}} \wedge \dots 
\wedge e_{i_{s_k}^{(k)}}\\
&= \sum_{s_1, \dots, s_k} 
\!\! \sum_{\begin{array}{c}
\scriptstyle 1 \le i_m^{(q)} \le N\\
\scriptstyle 1 \le m \le s_q, 1 \le q \le k,\\
\scriptstyle i_1^{(1)} < \dots < i_1^{(k)} 
\end{array}}
\prod_{q=1}^k \left(p_{i_1^{(q)}, \dots, i_{s_q}^{(q)}} 
\prod_{r=2}^{s_q}\langle \alpha_{i_r^{(q)}}, e_{i_{r-1}^{(q)}}\rangle 
\right) \\
&\hphantom{\sum_{s_1, \dots, s_k}}\times \sum_{\sigma \in S_k} 
(-1)^\sigma \prod_{q=1}^k \langle \alpha_{i_1^{(q)}}, 
u_{j_{\sigma(q)}}\rangle \times e_{i_{s_1}^{(1)}} \wedge \dots \wedge 
e_{i_{s_k}^{(k)}}
 \end{align*}
The coefficient at $u_{j_1} \wedge \dots \wedge u_{j_k}$ in $M(u_{j_1} 
\wedge \dots \wedge u_{j_k})$ is then equal to
 \begin{align*}
\sum_{s_1, \dots, s_k} 
\!\! &\sum_{\begin{array}{c}
\scriptstyle 1 \le i_m^{(q)} \le N\\
\scriptstyle 1 \le m \le s_q, 1 \le q \le k,\\
\scriptstyle i_1^{(1)} < \dots < i_1^{(k)} 
\end{array}}
\prod_{q=1}^k \left(p_{i_1^{(q)}, \dots, i_{s_q}^{(q)}} 
\prod_{r=2}^{s_q}\langle \alpha_{i_r^{(q)}}, e_{i_{r-1}^{(q)}}\rangle 
\right) \\
&\times \det (\langle \alpha_{i_1^{(q)}}, u_{j_r}\rangle)_{1 \le q, r \le 
k} \times \det (\langle u_{j_r}, e_{i_{s_q}^{(q)}}\rangle)_{1 
\le q, r \le k}.
 \end{align*}
Hence
 \begin{align}
\mu_k &= \name{Tr} M^{\wedge k} = \sum_{s_1, \dots, s_k} 
\!\! \sum_{\begin{array}{c}
\scriptstyle 1 \le i_m^{(q)} \le N\\
\scriptstyle 1 \le m \le s_q, 1 \le q \le k,\\
\scriptstyle i_1^{(1)} < \dots < i_1^{(k)} 
\end{array}}
\prod_{q=1}^k \left(p_{i_1^{(q)}, \dots, i_{s_q}^{(q)}} 
\prod_{r=2}^{s_q}\langle \alpha_{i_r^{(q)}}, e_{i_{r-1}^{(q)}}\rangle 
\right) \label{Eq:Trace}\\
&\hphantom{\name{Tr} M^{\wedge k} = \sum_{s_1, \dots, s_k}}
\times \det (\langle \alpha_{i_1^{(r)}}, e_{i_{s_q}^{(q)}}\rangle)_{1 \le 
q,r \le k}\nonumber
 \end{align}

A multi-index $i_1^{(q)}, \dots, i_{s_q}^{(q)}$, $1 \le q \le k$, can be 
interpreted as a directed graph $G$ with the edges $(i_1^{(q)}, 
i_{s_q}^{(q)})$, $1 \le q \le k$, and a path $i_1^{(q)}, \dots, 
i_{s_q}^{(q)}$ joining endpoints $d_- = i_1^{(q)}$ and $d_+ = 
i_{s_q}^{(q)}$ of every edge. Conversely, a graph plus collection of paths 
is just the multi-index. So one can rearrange summation in \eqref{Eq:Trace} 
to sum over the graphs $G$ first, and then over the set of all paths for a 
given graph, getting $W(G)$ times the determinant. The determinant may be 
nonzero only if no two $i_1^{(q)}$ and no two $i_{s_q}^{(q)}$ are equal. 
Thus, in a graph $G$ entering the sum every vertex has at most one outgoing 
edge and at most one incoming edge. This means that $G$ is a DOOMB.
 \end{proof}

\subsection{Graph Laplacians and other examples}\label{SSec:Ex}

Here are some corollaries of Theorem \ref{Th:Main}.

\subsubsection{Determinants}\label{SSSec:Det}

It's a funny result demonstrating the nature of Theorem \ref{Th:Main}. 

Let $u_1, \dots, u_n$ be an orthonormal basis in $\Real^n$. Take $e_{ij} 
\bydef = u_i$ and $\alpha_{ij} \bydef = u_j$ for all $1 \le i,j \le n$. 
Take 
 \begin{equation*}
P(x_{11}, \dots, x_{nn}) \bydef \sum_{i,j=1}^n a_{ij} x_{ij}
 \end{equation*}
and apply Theorem \ref{Th:Main}. The matrix of the opertor $M$ in the basis 
$u_1,\dots, u_n$ is $(a_{ij})$. The polynomial $P$ is linear, so the 
paths entering equation \eqref{Eq:DefWeight} all lave length $1$. 
Consequently, the DOOMB $G$ of \eqref{Eq:Main} must be a union of $k$ loops 
attached to the vertices $(i_1,j_1), \dots, (i_k,j_k)$. So the summation in 
\eqref{Eq:Main} is over the set of unordered $k$-tuples $(i_1,j_1), \dots, 
(i_k,j_k)$ with $1 \le i_s, j_s \le N$ for all $s$. In other words, the 
summation is over the set of graphs $F$ with the vertices $1, 2, \dots, n$ 
and $k$ unnumbered directed edges (loops are allowed).

One has $\langle \alpha_{ij},e_{kl}\rangle = \delta_{jk}$, so the 
contribution of a graph $F$ into \eqref{Eq:Main} is equal to $a_{i_1 j_1} 
\dots a_{i_k j_k} \det (\delta_{i_p j_q})_{1 \le p \le k}^{1 \le q \le k}$. 
It is easy to see that the determinant is nonzero only if all the $i_p$ and 
all the $j_q$ are distinct (else the matrix has identical rows or columns), 
and for every $q$ there is $p \bydef \sigma(q)$ such that $j_q = i_p$ (else 
a matrix has a zero row). If these conditions are satisfied, the 
determinant is obviously $(-1)^{\name{sgn}(\sigma)}$ where 
$\name{sgn}(\sigma)$ is the parity of the permutation $\sigma$. Hence, 
Theorem \ref{Th:Main} in this case is reduced to the usual formula 
expressing coefficients of the characteristic polynomial of the operator 
via its matrix elements. 

\subsubsection{Graph Laplacians}\label{SSSec:Lapl}

Let $F$ be a directed graph without loops. Following \cite{Kenyon}, give 
the following definition:

 \begin{Definition}
A {\em line bundle with connection on $F$} is a function attaching a number 
$\phi_e \ne 0$ to every directed edge $e$ of the graph. By definition, also 
take $\phi_{-e} = \phi_e^{-1}$ where $-e$ is the edge $e$ with the 
direction reversed.
 \end{Definition}

To explain the name, attach a one-dimensional space $\Real$ (a fiber of the 
bundle) to every vertex of $F$ and interpret the number $\phi_e$ as the $1 
\times 1$-matrix of the operator of parallel transport along the edge $e$. 
For a path $\Lambda = (e_1, \dots, e_k)$ denote $\phi_\Lambda \bydef 
\phi_{e_1} \dots \phi_{e_k}$ (the operator of parallel transport along 
$\Lambda$). If the path $\Lambda$ is a cycle then $\phi_\Lambda$ is called a 
holonomy of the cycle.

Suppose now that $F$ has the vertices $1, 2, \dots, n$ and no multiple 
edges. Supply also every edge $(i,j)$ of $H$ with a weight $c_{ij} = 
c_{ji}$. Take $N = n(n-1)/2$, 
 \begin{equation}\label{Eq:DefPLapl}
P(x_{12}, \dots, x_{n-1,n}) = \sum_{1 \le i < j \le n} c_{ij} x_{ij},
 \end{equation}
and $e_{ij} \bydef u_i - \phi_{ij} u_j$ and $\alpha_{ij} \bydef u_i - 
\phi_{ji} u_j)$, and consider the operator $M$ like in \eqref{Eq:DefM}.

If $v = \sum_{i=1}^n v_i u_i$ then 
 \begin{equation}\label{Eq:Lapl}
M(v) = \sum_{1 \le i < j \le n} c_{ij} (v_i - \phi_{ji}v_j) (u_i - 
\phi_{ij}u_j) = \sum_{i=1}^n u_i \sum_{j \ne i} c_{ij} (v_i - \phi_{ji} 
v_j).
 \end{equation}
The operator $M$ is called in \cite{Kenyon} a Laplacian of the bundle.

Call a graph $F$ a {\em mixed forest} if every its connected component is 
either a tree or a graph with one cycle (a connected graph with the number 
of vertices equal to the number of edges). The graphs where each component 
contains one cycle are called {\em cycle-rooted spanning forests (CRSF)} in 
\cite{Kenyon}; hence the name ``mixed forest'' here.

The following corollary of Theorem \ref{Th:Main} generalizes the 
Matrix-CRSF theorem of \cite{Forman} and \cite{Kenyon}:

 \begin{theorem} \label{Th:Forman}
The characteristic polynomial of the Laplacian \eqref{Eq:Lapl} of a line 
bundle on a graph is equal to $\sum_{k=0}^n (-1)^k \mu_k t^k$ where 
 \begin{equation}\label{Eq:MixForest}
\mu_k = \sum_{F \in \name{\mathcal{MF}}_{n,k,\ell}} \prod_{\substack{(pq) 
\text{ is} \\ \text{an edge of } F}} c_{pq} \prod_{i=1}^{n-k} (m_i+1) 
\prod_{j=1}^\ell (1-w_j)(1-1/w_j).
 \end{equation}
Here $\name{\mathcal{MF}}_{n,k,\ell}$ is the set of mixed forests 
containing $n$ vertices, $k$ edges and split into $n-k$ tree components and 
$\ell$ one-cycle componenets; $m_i$ is the number of edges in the $i$-th 
component, and $w_j$ is the holonomy of the cycle in the $j$-th component.
 \end{theorem}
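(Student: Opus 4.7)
My plan is to apply Theorem~\ref{Th:Main} directly with the choices from \eqref{Eq:DefPLapl} and then evaluate the resulting determinantal sum. Because $P$ is linear, the inner sum in \eqref{Eq:DefWeight} only contributes at $s=1$, so $W_P(a,b)=c_{ij}$ when $a=b=(ij)$ and vanishes otherwise. This will force any DOOMB $G$ with $W_P(G)\ne 0$ to consist entirely of loops; a union of $k$ loops at distinct ``vertices'' $(i_1,j_1),\dots,(i_k,j_k)$ is the same datum as a subgraph $F \subseteq K_n$ with $k$ edges, and formula~\eqref{Eq:Main} will reduce to
\[
\mu_k = \sum_{F \subseteq K_n,\ |E(F)|=k}\Bigl(\prod_{(ij)\in E(F)} c_{ij}\Bigr)\det L_F,
\]
where $L_F$ is the $k\times k$ ``edge Laplacian'' with entries $\langle \alpha_{ij}, e_{i'j'}\rangle$ indexed by the edges of $F$.

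Next I will exploit that $\langle \alpha_{ij}, e_{i'j'}\rangle=0$ whenever $\{i,j\}\cap\{i',j'\}=\varnothing$, since both vectors then live in disjoint coordinate planes of the basis $u_1,\dots,u_n$. Consequently $L_F$ is block diagonal along the connected components $C$ of $F$, so $\det L_F=\prod_C \det L_C$. Writing $L_C=A_C^T B_C$ with $A_C,B_C$ the $n\times|E(C)|$ matrices whose columns are the vectors $\alpha_{ij}$ and $e_{ij}$, Cauchy--Binet will give
\[
\det L_C=\sum_{S\subseteq V(C),\ |S|=|E(C)|}\det A_C[S,\cdot]\ \det B_C[S,\cdot].
\]

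The bulk of the work will be the evaluation of $\det L_C$ for each possible type of component. If $C$ is a tree with $m$ edges and $m{+}1$ vertices, $S$ must omit a single vertex $v$; orienting the tree toward $v$ will turn $B_C[S_v,\cdot]$ and $A_C[S_v,\cdot]$ into triangular matrices whose diagonal $\phi$-powers are mutually inverse, so each summand equals $1$ and $\det L_T=m+1$. If $C$ is unicyclic with $m$ edges on $m$ vertices, only $S=V(C)$ is admissible; expanding $\det B_C$ along the column of one cycle edge $e^*=(p,q)$ will write it as a combination of two tree minors, and doing the same for $\det A_C$, I expect the tree-edge $\phi$-factors to cancel between the two expansions while the cycle factors assemble into the holonomy $w$, giving $\det L_C=(1-w)(1-1/w)$. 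If $|E(C)|>|V(C)|$ then no admissible $S$ exists and $\det L_C=0$, so only mixed forests contribute; collecting terms will produce \eqref{Eq:MixForest}.

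The main obstacle is the unicyclic case: one must track signs and $\phi$-powers through the two cofactor expansions simultaneously and verify that the non-cycle edges contribute trivially while the cycle edges assemble into the symmetric expression $(1-w)(1-1/w)$. The subspace-duality Lemma~\ref{Lm:OrthComp} announced in the introduction likely provides a cleaner, coordinate-free derivation of these component formulas than the raw cofactor bookkeeping sketched above.
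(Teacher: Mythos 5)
Your proposal is correct and its overall skeleton coincides with the paper's: apply Theorem~\ref{Th:Main}, observe that linearity of $P$ forces every DOOMB to be a union of loops so that the sum runs over $k$-edge subgraphs $F$, note that $\langle\alpha_{ij},e_{i'j'}\rangle=0$ for disjoint edges so the determinant factors over connected components, and show that components with more edges than vertices contribute zero, leaving only mixed forests. Where you genuinely diverge is in the evaluation of the component determinants. The paper proves $\det Q_T=m+1$ for a tree (Lemma~\ref{Lm:QTree}) by a cofactor argument showing the determinant is independent of all $\phi_{ij}$ and then reducing, via elementary equivalences at $\phi\equiv 1$, to a path graph; and it handles the unicyclic case (Lemma~\ref{Lm:PQRCycle}) by elementary equivalences of the vector systems $A_H,E_H$ that diagonalize the Gram matrix with $(1-w)(1-1/w)$ in one corner. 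You instead factor $L_C=A_C^{T}B_C$ and invoke Cauchy--Binet. For trees this is arguably cleaner than the paper's two-step argument: each admissible $S$ omits one vertex, the two minors are triangular with mutually inverse diagonal $\phi$-powers, and each of the $m+1$ summands equals $1$ on the nose. For the unicyclic case your cofactor expansion along a cycle edge does work (the two tree minors differ by the $\phi$-product along the path between the endpoints, which combines with $\phi_{pq}$ into the holonomy, and the tree-edge factors cancel between $\det A_C$ and $\det B_C$ since $\phi_{ij}\phi_{ji}=1$), but the sign bookkeeping you flag is real: note that the paper's Lemma~\ref{Lm:PQRCycle} states $\det Q_H=-(1-w)(1-1/w)$ while its own proof and the statement of Theorem~\ref{Th:Forman} give $+(1-w)(1-1/w)$, so carrying your computation through carefully would also settle that discrepancy. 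One small correction: Lemma~\ref{Lm:OrthComp} plays no role in the paper's proof of this theorem (it is used only for the level~$2$ result), so you should not expect it to simplify the component computations here.
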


A special case of Theorem \ref{Th:Forman} arises if $\phi_{ij}=1$ for all 
$i,j$. Then \eqref{Eq:Lapl} implies $M = \sum_{1 \le p < q \le n} c_{pq} 
(1-\sigma_{pq})$ where $\sigma_{pq}$ is a reflection exchanging the $p$-th 
and the $q$-th coordinate: $\sigma_p(u_p) = u_q$, $\sigma_{pq}(u_q) = u_p$ 
and $\sigma_{pq}(u_i) = u_i$ for $i \ne p,q$. The reflections $\sigma_{pq}$ 
generate the Coxeter group $A_n$. The holonomies here are all equal to $1$, 
so a mixed forest entering a summation in Theorem \ref{Th:Forman} cannot 
have cycles and should be a ``real'' forest:

 \begin{corollary} \label{Cr:MTT}
The characteristic polynomial of the operator $M = \sum_{1 \le p < q \le 
n-1} c_{pq} (1-\sigma_{pq})$ is equal to $\sum_{k=0}^n (-1)^k \mu_k t^k$ 
where 
 \begin{equation*}
\mu_k = \sum_{F \in \name{\mathcal{F}}_{n,k}} \prod_{\substack{(pq) \text{ 
is}\\ \text{an edge of } F}} c_{pq} \prod_{i=1}^{n-k} (m_i+1).
 \end{equation*}
Here $\name{\mathcal{F}}_{n,k}$ is the set of forests with $n$ vertices, 
$k$ edges and $n-k$ components; $m_i$ is the number of edges in the $i$-th 
component.
 \end{corollary}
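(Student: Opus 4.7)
The plan is to obtain Corollary \ref{Cr:MTT} as the immediate specialization of Theorem \ref{Th:Forman} to the trivial line bundle $\phi_{ij}\equiv 1$. First I would verify that, with this choice of $\phi$, the operator produced by the construction of \eqref{Eq:Lapl} really is $\sum_{p<q} c_{pq}(1-\sigma_{pq})$. This is a one-line check: for $v=\sum_i v_i u_i$ one has $(1-\sigma_{pq})(v)=(v_p-v_q)(u_p-u_q)$, and summing against $c_{pq}$ reproduces the double sum $\sum_i u_i\sum_{j\ne i}c_{ij}(v_i-v_j)$ obtained from \eqref{Eq:Lapl} at $\phi=1$. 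This identifies $M$ with the operator to which Theorem \ref{Th:Forman} applies, so $\name{char}_M(t)=\sum_k(-1)^k\mu_k t^k$ with $\mu_k$ given by \eqref{Eq:MixForest}.

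Next I would compute the holonomies. Since every $\phi_e=1$, the holonomy $\phi_\Lambda=\prod_i\phi_{e_i}$ of every cycle equals $1$. Thus, in the formula \eqref{Eq:MixForest}, the factor $(1-w_j)(1-1/w_j)$ vanishes identically for each one-cycle component, and any mixed forest containing at least one such component ($\ell\ge 1$) contributes zero to $\mu_k$. The surviving terms are precisely those indexed by ordinary forests $F\in\mathcal{F}_{n,k}$ (the case $\ell=0$), where the product over cycle components is empty and the remaining weights $\prod c_{pq}\cdot\prod_i(m_i+1)$ exactly match the claimed formula.

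There is essentially no obstacle: beyond confirming the operator identity above, the proof relies only on the trivial vanishing $(1-w)(1-1/w)=0$ at $w=1$. The one piece of bookkeeping I would make sure to state cleanly is the accounting of isolated-vertex components of $F$, each counted as a tree with $m_i=0$ contributing a harmless factor $m_i+1=1$; once that is noted, the passage from \eqref{Eq:MixForest} to the corollary's formula is term-by-term identical.
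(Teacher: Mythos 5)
Your proposal is correct and follows the same route as the paper: specialize Theorem \ref{Th:Forman} to the trivial connection $\phi_{ij}\equiv 1$, check via \eqref{Eq:Lapl} that the resulting Laplacian is $\sum_{p<q}c_{pq}(1-\sigma_{pq})$, and observe that every cycle then has holonomy $w=1$, so the factor $(1-w_j)(1-1/w_j)$ kills all mixed forests with cycle components, leaving only genuine forests. The remark about isolated vertices contributing a factor $m_i+1=1$ is a harmless extra bit of bookkeeping the paper leaves implicit.
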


Apparently, $\det M = 0$ (there are no forests with $n$ vertices and $n$ 
edges), so the summation is indeed up to $k = n-1$. This corollary follows 
also from the classical Principal Minors Matrix-Tree Theorem, see e.g.\ 
\cite{Chaiken} for proofs and related results.

Another possibility is to join every pair of vertices $(i,j)$ with {\em 
two} edges: $(i,j)_-$ with $\phi_{ij}^- = 1$ (``a $-$-edge'', because 
$e_{ij}^{-} = u_i - u_j$) and $(i,j)_+$ with $\phi_{ij}^+ = -1$ (``a 
$+$-edge'' because $e_{ij}^{+} = u_i + u_j$); the weights are $c_{ij}^{+}$ 
and $c_{ij}^{-}$, respectively. The holonomy of a cycle is $w = (-1)^d$ 
where $d$ is the number of $+$-edges in the cycle. By \eqref{Eq:Lapl}, $M = 
\sum_{1 \le p < q \le n} c_{pq}^-(1 - \sigma_{pq}) + c_{pq}^+(1 - 
\tau_{pq})$ where $\sigma_{pq}$ is as before and $\tau_{pq}$ is a 
reflection mapping $\tau_{pq}(u_p) = -u_q$, $\tau_{pq}(u_q) = -u_p$ and 
$\tau_{pq}(u_i) = u_i$ for $i \ne p,q$; the reflections $\sigma$ and $\tau$ 
generate a Coxeter group $D_n$. So one has

 \begin{corollary} \label{Cr:MTTD}
The characteristic polynomial of the operator 
 \begin{equation*}
M = \sum_{1 \le p < q \le n} c_{pq}^-(1 - \sigma_{pq}) + c_{pq}^+(1 - 
\tau_{pq})
 \end{equation*}
is equal to $\sum_{k=0}^n (-1)^k \mu_k t^k$ where 
 \begin{equation*}
\mu_k = \sum_{F \in \name{\mathcal{MFD}}_{n,k,\ell}} 
\prod_{\substack{(pq)_s \text{ is}\\ \text{an edge of } F}} c_{pq}^s 
\prod_{i=1}^{n-k} (m_i+1).
 \end{equation*}
Here $\name{\mathcal{MFD}}_{n,k,\ell}$ is the set of mixed forests with $k$ 
edges $(pq)_s$, $n-k$ tree components and $\ell$ one-cycle components such 
that the number of $+$-edges in every cycle is odd; $m_i$ is the number of 
edges in the $i$-th components.
 \end{corollary}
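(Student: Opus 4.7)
The plan is to specialize Theorem \ref{Th:Forman} to the ``doubled'' graph described in the paragraph preceding the corollary, and then to evaluate the cycle factors $(1-w_j)(1-1/w_j)$ explicitly on this particular line bundle.

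First, I would verify that the operator $M$ of the corollary coincides with the Laplacian \eqref{Eq:Lapl} for the following data: take as the base graph the one on $\{1,\dots,n\}$ in which every pair of vertices $(p,q)$ is joined by two edges $(p,q)_-$ and $(p,q)_+$, with $\phi_{pq}^- = 1$, $\phi_{pq}^+ = -1$, and weights $c_{pq}^-$ and $c_{pq}^+$ respectively. Substituting these data into \eqref{Eq:Lapl} converts each $-$-edge contribution into $c_{pq}^-(1-\sigma_{pq})$ and each $+$-edge contribution into $c_{pq}^+(1-\tau_{pq})$, which matches $M$ in the corollary exactly. This is a direct check using the explicit forms of $\sigma_{pq}$ and $\tau_{pq}$.

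Next, I would apply Theorem \ref{Th:Forman} to express $\mu_k$ as a sum over mixed forests in this doubled graph, weighted by
\[
\Bigl(\prod c^{s}_{pq}\Bigr)\cdot\prod_{i=1}^{n-k}(m_i+1)\cdot\prod_{j=1}^\ell (1-w_j)(1-1/w_j).
\]
The key observation is that because $\phi_{pq}^{\pm} = \pm 1$, the holonomy of any cycle is multiplicative and equal to $w_j = (-1)^{d_j}$, where $d_j$ is the number of $+$-edges in that cycle. Therefore $(1-w_j)(1-1/w_j) = 0$ whenever $d_j$ is even and equals $4$ whenever $d_j$ is odd. Only mixed forests whose cycles all have an odd number of $+$-edges survive; these are by definition the elements of $\name{\mathcal{MFD}}_{n,k,\ell}$.

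In this sense the corollary is essentially a bookkeeping specialization of Theorem \ref{Th:Forman}. The main step requiring care is the holonomy evaluation above --- verifying that even-$d_j$ cycles really drop out and tracking the constant $4^\ell$ contributed by odd-$d_j$ cycles, which must either be absorbed into an implicit normalization of the cycle components in the definition of $\name{\mathcal{MFD}}_{n,k,\ell}$ or made explicit in the stated formula. Beyond this, no further technical input is needed: the reduction from Theorem \ref{Th:Forman} to Corollary \ref{Cr:MTTD} is driven entirely by the multiplicativity of holonomy together with $\phi_{pq}^\pm = \pm 1$.
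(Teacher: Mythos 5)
Your proof follows exactly the paper's route: the paragraph preceding the corollary sets up the doubled graph with $\phi^{\pm}_{pq}=\pm 1$, computes the holonomy $w=(-1)^d$, and reads the corollary off Theorem \ref{Th:Forman} by discarding the cycles with even $d$. You are also right to flag the cycle factor $(1-w_j)(1-1/w_j)=4$ for each surviving cycle: the paper's stated formula for $\mu_k$ omits the resulting $4^\ell$, and this appears to be an oversight in the statement rather than a normalization hidden in the definition of $\name{\mathcal{MFD}}_{n,k,\ell}$.
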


This corollary generalizes \cite[Theorem 3.2]{BPT}.

\subsubsection{The level $2$ Laplacian}\label{SSSec:Level2}

Take up the same setting as in Section \ref{SSSec:Lapl} (a line bundle with 
a connection over a graph $F$), and take
 \begin{equation}\label{Eq:Quadr}
P(x_{12}, \dots, x_{n-1,n}) = \sum_{\substack{1 \le i \le n, \\ 1 \le j < k 
\le n}} c_{ijk} (x_{ij} x_{ik} - x_{ik} x_{ij}) = \sum_{1 \le i,j,k \le n} 
c_{ijk} x_{ij} x_{ik};
 \end{equation}
here the constants $c_{ijk}$ are defined for all $1 \le i,j,k \le n$ and 
possess the property $c_{ijk} = -c_{ikj}$ for all $i,j,k$ (in particular, 
$c_{ijj} = 0$ for all $i, j$). Define $M$ by \eqref{Eq:DefM}: $M \bydef 
P(M_{12}, \dots, M_{n-1,n})$ where $M_{ij}(v) = \langle 
\alpha_{ij},v\rangle e_{ij}$. Explicitly, if $v = \sum_{p=1}^n v_p u_p$ 
then 
 \begin{equation*}
M_{ij}(v) = (v_i - \phi_{ji} v_j)(u_i - \phi_{ji} u_j) = (v_i - \phi_{ji} 
v_j) u_i + (v_j - \phi_{ij} v_i) u_j.
 \end{equation*}
and
 \begin{equation}\label{Eq:MExpl}
M(v) = \sum_{i \ne j} u_i v_j \left(\phi_{ij} \sum_{k \ne i,j} 
(c_{ijk}+c_{jki}) + \sum_{k \ne i,j} c_{kij} \phi_{ik} \phi_{kj}\right).
 \end{equation}

 \begin{remark} \label{Rm:Order}
Note that $M_{ij} = M_{ji}$ because $\alpha_{ji} = -\phi_{ji} \alpha_{ij}$ 
and $e_{ji} = -\phi_{ij} e_{ij}$. Nevertheless, since $\alpha$ and $e$ 
enter the equation \eqref{Eq:Main} separately, one has to choose the 
ordering of indices in every pair $(i,j)$ used; the final result, of 
course, does not depend on the choice. We will write $\alpha_{[ij]}$ 
meaning $\alpha_{ij}$ or $\alpha_{ji}$ depending on the choice; the same 
for $e$.
 \end{remark}

 \begin{remark}
In particular, if $\phi_{ij} = 1$ for all $i,j$, then $M(v) = \sum_{1 \le i 
< j < k \le n} w_{ijk} v_i u_j$ where $w_{ijk} \bydef 
c_{ijk}+c_{jki}+c_{kij}$. Note that $w_{ijk}$ is totally skew-symmetric in 
all the three indices, and therefore the operator $M$ is skew-symmetric. In 
\cite{MasVai} a beautiful formula for the Pfaffian of $M$ was proved; see 
also \cite{BPT} for a proof of the same formula using a technique close to 
Theorem \ref{Th:Main}.
 \end{remark}

We call $M$ the {\em level $2$ Laplacian} of the bundle, by an apparent 
analogy with the operator defined by \eqref{Eq:Lapl}. Note that $M_{ij}$ 
and $M_{kl}$ commute if $\{i,j\} \cap \{k,l\} = \emptyset$ or $\{i,j\} = 
\{k,l\}$, that's why $P$ contains no terms like $x_{ij} x_{kl} - x_{kl} 
x_{ij}$.

Application of Theorem \ref{Th:Main} to the level $2$ Laplacian gives the 
formula similar to \eqref{Eq:MixForest}, where the summation is done over 
the set of triangulated polyhedra of special kind.

A {\em nodal surface} is obtained from a smooth surface (a $2$-manifold, 
not necessarily connected, possibly with boundary), by gluing a finite 
number of points. If the surface has boundary then boundary points also can 
be glued. A boundary of the nodal surface is still well-defined, but unlike 
the boundary of a smooth surface, it can be any graph, not just collection 
of circles. Nodal surfaces with boundary attracted much attention in recent 
years due to their connection with the geometry of moduli spaces of complex 
structures, see \cite{Penner}.

Depending on the surface, we will speak about nodal disks, annuli, Moebius 
bands, etc.

A compact $2$-dimensional polyhedron $H$ is called reducible if it can be 
split into a union $H = H_1 \cup \dots \cup H_\ell$ where the polyhedra 
$H_i$ are compact and edge-disjoint (but not necessarily vertex-disjoint!). 
Every reducible polyhedron is a union of irreducible components. 

A $2$-dimensional polyhedron is called a {\em cycle polyhedron} if it is 
ireducible, homeomorphic to a nodal surface, and every its face is a 
triangle with exactly one side on the boundary. A $2$-dimensional 
polyhedron is called a {\em chain polyhedron}, if the last condition is 
satisfied for all the faces except two. Each of these two faces has two 
sides on the boundary. One face is called an initial face; one of its 
boundary sides is marked an called an initial side. The other exceptional 
face is called a terminal one; one of its boundary sides is marked and 
called a terminal side.

Choosing an orientation of a face of a cycle polyhedron is equivalent to 
ordering its sides: the first internal side, the second, the boundary side. 
For a chain polyhedron the rule is the same except for the initial and the 
terminal face. For the initial face the ordering is: the initial side, the 
internal side, the second boundary side; for the terminal face: the 
internal side, the terminal side, the second boundary side. Say that an 
orientation of two adjacent faces sharing a side $a$ is compatible if $a$ 
is the first internal side in one face and the second internal side in the 
other.

 \begin{lemma} \label{Lm:CycleChain}
A cycle polyhedron is one of the following:

 \begin{enumerate}
\item a nodal annulus where all vertices lie in the boundary;

\item a nodal Moebius band with the same property;

\item\label{It:Disk} a disk (smooth) with a vertex in the interior joined 
by edges with all the other vertices, which lie on the boundary.
 \end{enumerate}

A chain polyhedron is a nodal disk.

For every cycle polyhedron there are exactly two ways to orient all its 
faces compatibly. For every chain polyhedron (where the initial and 
terminal sides are chosen) there is exactly one way to orient all its faces 
compatibly.
 \end{lemma}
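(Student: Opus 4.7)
The plan is to first analyze the combinatorics of face-to-face adjacencies, then deduce the topological type, and finally count compatible orientations.

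The face-adjacency graph $T$ has one vertex for each face of $H$ and one edge for each internal side, joining the two faces that share it. A triangle with exactly one boundary side has two internal sides, so every face of a cycle polyhedron has valence $2$ in $T$; in a chain polyhedron the initial and terminal faces carry a single internal side and therefore have valence $1$, while all other faces have valence $2$. Irreducibility of $H$ forces $T$ to be connected, since a disconnection of $T$ would partition the face set into two edge-disjoint compact sub-polyhedra. Hence $T$ is a single cycle for a cycle polyhedron and a single path for a chain polyhedron.

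To identify the topological type, I distinguish whether $H$ has an interior vertex. If some vertex $v$ lies on no boundary edge, then in every triangle containing $v$ the vertex sits at the intersection of the two internal sides. The two internal sides at $v$ are shared with the two triangles adjacent in $T$, which must therefore also contain $v$ in the same apex position; iterating along the connected graph $T$ shows that every triangle of $H$ has $v$ as its apex. The resulting polyhedron is a fan of $F$ triangles around $v$, and the requirement that each internal edge is shared by exactly two faces forces the remaining vertices to be pairwise distinct, yielding a smooth disk with $v$ interior joined by internal edges to each boundary vertex (case~\ref{It:Disk}). If no vertex is interior, every vertex lies on the boundary. For a chain polyhedron, building up by successively gluing triangles along the path $T$ gives a disk with possibly some nodal vertex identifications --- a nodal disk. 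For a cycle polyhedron, cutting one internal side reduces to the chain case giving a disk, and the closing identification glues two disjoint boundary edges of this disk, producing either an annulus (orientation-matching) or a M\"obius band (orientation-reversing) with possible further nodal identifications of boundary points; this yields case~(i) or~(ii).

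For the orientation count, orienting a non-exceptional face amounts to labelling one of its two internal sides ``first'' and the other ``second'', the compatibility rule forcing opposite labels at each shared internal side. The labels on one face then determine those on every neighbour via $T$. For a chain polyhedron the marked initial side pins the orientation of the initial face, propagation along the path uniquely determines the orientation of every other face, and the marked terminal side fits automatically by the symmetric rule for the terminal face --- giving a single compatible orientation. For a cycle polyhedron, propagation around $T$ toggles ``first''/``second'' at each shared edge and closes up consistently, since each face ends up with its two internal sides labelled oppositely; this produces exactly two compatible orientations, one for each choice of label at a starting face.

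The principal obstacle lies in the topological classification: establishing that the existence of an interior vertex forces every triangle to share the same apex (so we genuinely land in case~\ref{It:Disk}) and, simultaneously, that the internal-edge incidence structure prevents further nodal identifications of the boundary vertices in this case. For the alternative branch one must verify that the closing identification in the cyclic gluing yields exactly an annulus or M\"obius band and no more exotic surfaces. A careful vertex-by-vertex bookkeeping around the cyclic structure is the main work.
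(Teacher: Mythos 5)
Your overall architecture is sound and in one respect goes beyond the paper: your face-adjacency graph $T$ is just the edge--vertex dual of the graph $G$ the paper builds (vertices of $G$ are the internal edges of $H$, and its edges are the faces), your interior-vertex dichotomy correctly isolates the smooth-disk case (a vertex lies on every internal edge if and only if it lies on no boundary side, since the apex of a triangle is opposite its boundary side), and your propagation argument for the orientation count is correct and complete --- the paper's own proof is in fact silent on that part of the lemma. The problem is that the topological classification, which is the real content of the statement, is only asserted, and you say so yourself in your closing paragraph. Two things are missing. First, before invoking any classification of surfaces you must show that the cyclically glued complex \emph{is} a nodal surface, i.e.\ that it is obtained from a genuine $2$-manifold with boundary by identifying finitely many points; a priori the link of a vertex that occurs in several separated runs of faces around the cycle is a disjoint union of arcs, and one must recognize this as a node rather than a worse singularity. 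Second, in the regluing step the two copies of the cut edge need not be disjoint arcs in the boundary of the cut-open disk: they carry the same endpoint labels, and if those endpoints are already identified in the cut-open object the gluing is not the standard ``attach a handle or a crosscap to a disk'' move. Your phrase ``with possible further nodal identifications of boundary points'' sweeps exactly this difficulty under the rug.

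The paper resolves both points at once by a device worth adopting: for each vertex $i$ of $H$ it considers the set $K_i$ of internal edges containing $i$ as a subset of the cycle $G$, splits each $K_i$ into maximal ``solid arcs'', and renames the index on each arc. This exhibits $H$ as the quotient of a polyhedron $H'$ in which every $K_i$ is a single arc --- so every vertex link is an arc, $H'$ is an honest surface with boundary, and the quotient map identifies only finitely many points, which is precisely the sense in which $H$ is nodal. An Euler characteristic count (covering an auxiliary circle by arcs $L_i$ shows $H'$ has exactly $m$ vertices; together with $m$ faces, $m$ red and $m$ black edges this gives $\chi(H')=m-2m+m=0$) then identifies $H'$ as an annulus or a M\"obius band with all vertices on the boundary. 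If you prefer to keep your cut-and-reglue route you must supply an equivalent amount of bookkeeping; as written, the passage from the combinatorial cycle structure to ``nodal annulus or M\"obius band'' is a gap, not a proof.
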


See Section \ref{SSec:Level2} for proof. 

 \begin{example}
See Fig.~\ref{Fg:Bristle}. The left-hand side is a cycle polyhedron (a 
nodal Moebius band with the node $c$), the right-hand side is a chain 
polyhedron (a nodal disk with the node $c$). Solid lines represent internal 
edges and exceptional edges ($ab$ and $ef$ at the nodal disk); dotted lines 
represent boundary edges. The graph $G$ is drawn below; the cycle is 
directed clockwise and a line is directed left to right. The corresponding 
ordering of edges inside every face of $H$ is shown by the numbers $1,2,3$.
 \end{example}

 \begin{figure}
\setlength{\unitlength}{0.075in}
\begin{picture}(36.85,22.85)
\put(33.05,15.33){{\setbox0=\hbox{{\tiny 3}}\lower\ht0\box0}}
\put(33.61,18.53){{\setbox0=\hbox{{\tiny 2}}\lower\ht0\box0}}
\put(32.15,16.36){{\setbox0=\hbox{{\tiny 1}}\lower\ht0\box0}}
\put(26.28,15.36){{\setbox0=\hbox{{\tiny 3}}\lower\ht0\box0}}
\put(25.25,17.90){{\setbox0=\hbox{{\tiny 2}}\lower\ht0\box0}}
\put(24.71,16.36){{\setbox0=\hbox{{\tiny 1}}\lower\ht0\box0}}
\put(12.58,15.26){{\setbox0=\hbox{{\tiny 3}}\lower\ht0\box0}}
\put(13.51,17.13){{\setbox0=\hbox{{\tiny 2}}\lower\ht0\box0}}
\put(12.01,16.03){{\setbox0=\hbox{{\tiny 1}}\lower\ht0\box0}}
\put(11.68,20.73){{\setbox0=\hbox{{\tiny 3}}\lower\ht0\box0}}
\put(12.68,19.53){{\setbox0=\hbox{{\tiny 2}}\lower\ht0\box0}}
\put(11.41,17.50){{\setbox0=\hbox{{\tiny 1}}\lower\ht0\box0}}
\put(8.08,20.80){{\setbox0=\hbox{{\tiny 3}}\lower\ht0\box0}}
\put(10.11,18.86){{\setbox0=\hbox{{\tiny 2}}\lower\ht0\box0}}
\put(6.38,20.00){{\setbox0=\hbox{{\tiny 1}}\lower\ht0\box0}}
\put(8.78,15.30){{\setbox0=\hbox{{\tiny 3}}\lower\ht0\box0}}
\put(8.15,16.83){{\setbox0=\hbox{{\tiny 2}}\lower\ht0\box0}}
\put(7.51,15.80){{\setbox0=\hbox{{\tiny 1}}\lower\ht0\box0}}
\put(5.45,15.33){{\setbox0=\hbox{{\tiny 3}}\lower\ht0\box0}}
\put(5.51,17.83){{\setbox0=\hbox{{\tiny 2}}\lower\ht0\box0}}
\put(4.75,16.83){{\setbox0=\hbox{{\tiny 1}}\lower\ht0\box0}}
\put(35.55,6.80){{\setbox0=\hbox{$(e,f)$}\lower\ht0\box0}}
\put(32.35,10.36){{\setbox0=\hbox{$(c,e)$}\lower\ht0\box0}}
\put(30.51,6.80){{\setbox0=\hbox{$(c,d)$}\lower\ht0\box0}}
\put(27.35,10.26){{\setbox0=\hbox{$(b,d)$}\lower\ht0\box0}}
\put(25.45,6.66){{\setbox0=\hbox{$(b,c)$}\lower\ht0\box0}}
\put(22.45,10.20){{\setbox0=\hbox{$(a,b)$}\lower\ht0\box0}}
\put(36.81,7.83){{\setbox0=\hbox{$\scriptstyle\bullet$}\kern-.4\wd0\lower.5\ht0\box0}}
\put(34.28,7.83){{\setbox0=\hbox{$\scriptstyle\bullet$}\kern-.4\wd0\lower.5\ht0\box0}}
\put(31.85,7.83){{\setbox0=\hbox{$\scriptstyle\bullet$}\kern-.4\wd0\lower.5\ht0\box0}}
\put(29.28,7.83){{\setbox0=\hbox{$\scriptstyle\bullet$}\kern-.4\wd0\lower.5\ht0\box0}}
\put(26.75,7.83){{\setbox0=\hbox{$\scriptstyle\bullet$}\kern-.4\wd0\lower.5\ht0\box0}}
\put(24.31,7.83){{\setbox0=\hbox{$\scriptstyle\bullet$}\kern-.4\wd0\lower.5\ht0\box0}}
\put(3.98,6.16){{\setbox0=\hbox{$\scriptstyle\bullet$}\kern-.4\wd0\lower.5\ht0\box0}}
\put(6.38,1.16){{\setbox0=\hbox{$\scriptstyle\bullet$}\kern-.4\wd0\lower.5\ht0\box0}}
\put(12.35,1.16){{\setbox0=\hbox{$\scriptstyle\bullet$}\kern-.4\wd0\lower.5\ht0\box0}}
\put(14.38,6.16){{\setbox0=\hbox{$\scriptstyle\bullet$}\kern-.4\wd0\lower.5\ht0\box0}}
\put(9.28,9.46){{\setbox0=\hbox{$\scriptstyle\bullet$}\kern-.4\wd0\lower.5\ht0\box0}}
\special{em:linewidth 0.014in}
\put(24.21,7.83){\special{em:moveto}}
\put(36.85,7.83){\special{em:lineto}}
\put(34.51,22.66){{\setbox0=\hbox{f}\lower\ht0\box0}}
\put(24.38,14.66){\special{em:moveto}}
\put(24.71,14.66){\special{em:lineto}}
\put(25.05,14.66){\special{em:moveto}}
\put(25.38,14.66){\special{em:lineto}}
\put(25.71,14.66){\special{em:moveto}}
\put(26.05,14.66){\special{em:lineto}}
\put(26.38,14.66){\special{em:moveto}}
\put(26.71,14.66){\special{em:lineto}}
\put(27.05,14.66){\special{em:moveto}}
\put(27.38,14.66){\special{em:lineto}}
\put(27.71,14.66){\special{em:moveto}}
\put(28.05,14.66){\special{em:lineto}}
\put(28.38,14.66){\special{em:moveto}}
\put(28.71,14.66){\special{em:lineto}}
\put(29.05,14.66){\special{em:moveto}}
\put(29.38,14.66){\special{em:lineto}}
\put(29.71,14.66){\special{em:moveto}}
\put(30.05,14.66){\special{em:lineto}}
\put(30.38,14.66){\special{em:moveto}}
\put(30.71,14.66){\special{em:lineto}}
\put(31.05,14.66){\special{em:moveto}}
\put(31.38,14.66){\special{em:lineto}}
\put(31.71,14.66){\special{em:moveto}}
\put(32.05,14.66){\special{em:lineto}}
\put(32.38,14.66){\special{em:moveto}}
\put(32.71,14.66){\special{em:lineto}}
\put(33.05,14.66){\special{em:moveto}}
\put(33.38,14.66){\special{em:lineto}}
\put(33.71,14.66){\special{em:moveto}}
\put(34.05,14.66){\special{em:lineto}}
\put(34.35,13.76){{\setbox0=\hbox{e}\lower\ht0\box0}}
\put(30.98,22.83){{\setbox0=\hbox{c}\lower\ht0\box0}}
\put(30.81,13.73){{\setbox0=\hbox{d}\lower\ht0\box0}}
\put(27.55,13.60){{\setbox0=\hbox{c}\lower\ht0\box0}}
\put(24.35,22.83){{\setbox0=\hbox{b}\lower\ht0\box0}}
\put(24.25,13.63){{\setbox0=\hbox{a}\lower\ht0\box0}}
\put(24.38,21.16){\special{em:moveto}}
\put(24.71,21.16){\special{em:lineto}}
\put(25.05,21.16){\special{em:moveto}}
\put(25.38,21.16){\special{em:lineto}}
\put(25.71,21.16){\special{em:moveto}}
\put(26.05,21.16){\special{em:lineto}}
\put(26.38,21.16){\special{em:moveto}}
\put(26.71,21.16){\special{em:lineto}}
\put(27.05,21.16){\special{em:moveto}}
\put(27.38,21.16){\special{em:lineto}}
\put(27.71,21.16){\special{em:moveto}}
\put(28.05,21.16){\special{em:lineto}}
\put(28.38,21.16){\special{em:moveto}}
\put(28.71,21.16){\special{em:lineto}}
\put(29.05,21.16){\special{em:moveto}}
\put(29.38,21.16){\special{em:lineto}}
\put(29.71,21.16){\special{em:moveto}}
\put(30.05,21.16){\special{em:lineto}}
\put(30.38,21.16){\special{em:moveto}}
\put(30.71,21.16){\special{em:lineto}}
\put(31.05,21.16){\special{em:moveto}}
\put(31.38,21.16){\special{em:lineto}}
\put(31.71,21.16){\special{em:moveto}}
\put(32.05,21.16){\special{em:lineto}}
\put(32.38,21.16){\special{em:moveto}}
\put(32.71,21.16){\special{em:lineto}}
\put(33.05,21.16){\special{em:moveto}}
\put(33.38,21.16){\special{em:lineto}}
\put(33.71,21.16){\special{em:moveto}}
\put(34.05,21.16){\special{em:lineto}}
\put(34.38,21.16){\special{em:moveto}}
\put(34.38,14.50){\special{em:lineto}}
\put(31.05,14.50){\special{em:moveto}}
\put(34.38,21.16){\special{em:lineto}}
\put(31.05,14.50){\special{em:moveto}}
\put(31.05,21.16){\special{em:lineto}}
\put(24.38,21.16){\special{em:moveto}}
\put(31.05,14.50){\special{em:lineto}}
\put(24.38,21.16){\special{em:moveto}}
\put(27.71,14.50){\special{em:lineto}}
\put(24.38,21.16){\special{em:moveto}}
\put(24.38,14.50){\special{em:lineto}}
\put(0.01,6.93){{\setbox0=\hbox{$(b,c)$}\lower\ht0\box0}}
\put(5.95,0.00){{\setbox0=\hbox{$(c,d)$}\lower\ht0\box0}}
\put(12.48,0.06){{\setbox0=\hbox{$(b,d)$}\lower\ht0\box0}}
\put(15.05,6.86){{\setbox0=\hbox{$(b,c)$}\lower\ht0\box0}}
\put(8.45,11.83){{\setbox0=\hbox{$(a,b)$}\lower\ht0\box0}}
\put(14.28,22.83){{\setbox0=\hbox{a}\lower\ht0\box0}}
\put(10.98,22.83){{\setbox0=\hbox{c}\lower\ht0\box0}}
\put(14.11,13.73){{\setbox0=\hbox{b}\lower\ht0\box0}}
\put(10.81,13.73){{\setbox0=\hbox{d}\lower\ht0\box0}}
\put(7.55,13.60){{\setbox0=\hbox{c}\lower\ht0\box0}}
\put(4.35,22.83){{\setbox0=\hbox{b}\lower\ht0\box0}}
\put(4.25,13.63){{\setbox0=\hbox{a}\lower\ht0\box0}}
\put(9.31,9.50){\special{em:moveto}}
\put(14.38,6.16){\special{em:lineto}}
\put(12.35,1.16){\special{em:lineto}}
\put(6.38,1.16){\special{em:lineto}}
\put(3.98,6.16){\special{em:lineto}}
\put(9.31,9.50){\special{em:lineto}}
\put(4.38,21.16){\special{em:moveto}}
\put(4.71,21.16){\special{em:lineto}}
\put(5.05,21.16){\special{em:moveto}}
\put(5.38,21.16){\special{em:lineto}}
\put(5.71,21.16){\special{em:moveto}}
\put(6.05,21.16){\special{em:lineto}}
\put(6.38,21.16){\special{em:moveto}}
\put(6.71,21.16){\special{em:lineto}}
\put(7.05,21.16){\special{em:moveto}}
\put(7.38,21.16){\special{em:lineto}}
\put(7.71,21.16){\special{em:moveto}}
\put(8.05,21.16){\special{em:lineto}}
\put(8.38,21.16){\special{em:moveto}}
\put(8.71,21.16){\special{em:lineto}}
\put(9.05,21.16){\special{em:moveto}}
\put(9.38,21.16){\special{em:lineto}}
\put(9.71,21.16){\special{em:moveto}}
\put(10.05,21.16){\special{em:lineto}}
\put(10.38,21.16){\special{em:moveto}}
\put(10.71,21.16){\special{em:lineto}}
\put(11.05,21.16){\special{em:moveto}}
\put(11.38,21.16){\special{em:lineto}}
\put(11.71,21.16){\special{em:moveto}}
\put(12.05,21.16){\special{em:lineto}}
\put(12.38,21.16){\special{em:moveto}}
\put(12.71,21.16){\special{em:lineto}}
\put(13.05,21.16){\special{em:moveto}}
\put(13.38,21.16){\special{em:lineto}}
\put(13.71,21.16){\special{em:moveto}}
\put(14.05,21.16){\special{em:lineto}}
\put(14.38,14.50){\special{em:moveto}}
\put(14.05,14.50){\special{em:lineto}}
\put(13.71,14.50){\special{em:moveto}}
\put(13.38,14.50){\special{em:lineto}}
\put(13.05,14.50){\special{em:moveto}}
\put(12.71,14.50){\special{em:lineto}}
\put(12.38,14.50){\special{em:moveto}}
\put(12.05,14.50){\special{em:lineto}}
\put(11.71,14.50){\special{em:moveto}}
\put(11.38,14.50){\special{em:lineto}}
\put(11.05,14.50){\special{em:moveto}}
\put(10.71,14.50){\special{em:lineto}}
\put(10.38,14.50){\special{em:moveto}}
\put(10.05,14.50){\special{em:lineto}}
\put(9.71,14.50){\special{em:moveto}}
\put(9.38,14.50){\special{em:lineto}}
\put(9.05,14.50){\special{em:moveto}}
\put(8.71,14.50){\special{em:lineto}}
\put(8.38,14.50){\special{em:moveto}}
\put(8.05,14.50){\special{em:lineto}}
\put(7.71,14.50){\special{em:moveto}}
\put(7.38,14.50){\special{em:lineto}}
\put(7.05,14.50){\special{em:moveto}}
\put(6.71,14.50){\special{em:lineto}}
\put(6.38,14.50){\special{em:moveto}}
\put(6.05,14.50){\special{em:lineto}}
\put(5.71,14.50){\special{em:moveto}}
\put(5.38,14.50){\special{em:lineto}}
\put(5.05,14.50){\special{em:moveto}}
\put(4.71,14.50){\special{em:lineto}}
\put(14.38,21.16){\special{em:moveto}}
\put(14.38,14.50){\special{em:lineto}}
\put(11.05,14.50){\special{em:moveto}}
\put(14.38,21.16){\special{em:lineto}}
\put(11.05,14.50){\special{em:moveto}}
\put(11.05,21.16){\special{em:lineto}}
\put(4.38,21.16){\special{em:moveto}}
\put(11.05,14.50){\special{em:lineto}}
\put(4.38,21.16){\special{em:moveto}}
\put(7.71,14.50){\special{em:lineto}}
\put(4.38,21.16){\special{em:moveto}}
\put(4.38,14.50){\special{em:lineto}}
\end{picture}

\caption{A cycle polyhedron and a chain polyhedron}\label{Fg:Bristle}
 \end{figure}
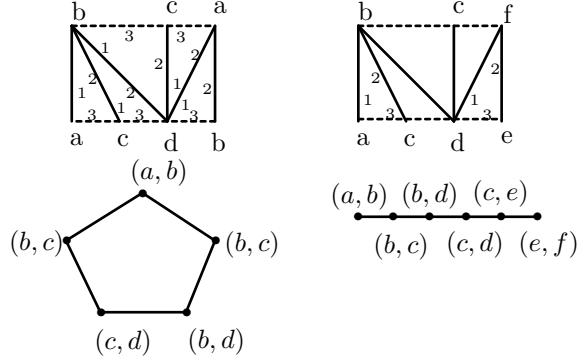

Denote by $\name{\mathcal{CP}}_{n,k}$ the set of polyhedra with the 
vertices $1, 2, \dots, n$, having $k$ faces, such that every its 
irreducible component is either a cycle polyhedron or a chain polyhedron 
with compatibly oriented faces. By $\name{ess}_1(H)$ denote the graph 
formed by the internal edges of $H$ and initial and terminal edges of its 
chain components. The initial and the terminal edges themselves denote by 
$I_1(H), \dots, I_s(H)$ and $T_1(H), \dots, T_s(H)$, respectively. Let $H 
\in \name{\mathcal{CP}}_{n,k}$ be such that the graphs $H^I \bydef 
\name{ess}_1(H) \setminus \{I_1(H), \dots, I_s(H)\}$ and $H^T \bydef 
\name{ess}_1(H) \setminus \{T_1(H), \dots, T_s(H)\}$ are mixed forests. Let 
$H^I_1, \dots, H^I_t$ and $H^T_1, \dots, 
H^T_t$ be connected components of $H^I$ and $H^T$, respectively, that are
trees (the number of such components $t = m-n-s$, where $m$ is the number 
of edges in $H$, is the same for both graphs). Choose in every tree a root 
$r_i^\alpha \in H^I_i$ and $r_j^e \in H^T_j$ and consider a matrix $M(H)$ 
such that 
 \begin{equation*}
M(H)_i^j = \sum_{\Lambda \in L_{ij}} \phi_\Lambda^2 n_\Lambda
 \end{equation*}
where $L_{ij}$ is the set of paths joining $r_j^e$ with $r_i^\alpha$, and 
$n_\Lambda$ is the number of vertices along the path $\Lambda$ that belong 
both to $H_i^I$ and $H_j^T$.

For an oriented face $F$ of a cycle or a chain polyhedron denote by 
$s_i(F)$ its $i$-th side ($i = 1, 2, 3$); by $v_i(F)$ denote the vertex 
opposite to the side number $i$. The internal sides are directed away from 
their common point; direction of the boundary side is not important.

 \begin{theorem} \label{Th:DetLevel2}
Let $M$ be a level $2$ Laplacian of a line bundle on a graph $F$. Then 
$\name{char}_M(t) = \sum_{k=0}^n (-1)^k \mu_k t^k$ where
 \begin{equation}\label{Eq:Level2}
 \begin{aligned}
\mu_k = \sum_{H \in \name{\mathcal{CP}}_{n,k}} \prod_{\substack{\Phi \text{ 
is}\\ \text{a face of }H}} &c_{v_3(\Phi)v_1(\Phi)v_2(\Phi)} \langle 
\alpha_{[s_1(\Phi)]}, e_{[s_2(\Phi)]}\rangle \\
&\times \det M(H) \times \prod_{i,j=1}^s \prod_{\substack{(pq) \in H \text{ 
lies}\\ \text{in a path joining}\\ r_i^\alpha \text{ with } r_j^e}} 
\phi_{pq}
 \end{aligned}
 \end{equation}
 \end{theorem}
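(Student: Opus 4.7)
The plan is to specialize Theorem~\ref{Th:Main} to the polynomial \eqref{Eq:Quadr} and then re-express its output in the polyhedral language of \S\ref{SSSec:Level2}. Since the $M_{ij}$ are indexed by pairs $\{i,j\}\subset\{1,\dots,n\}$, a DOOMB $G$ appearing in \eqref{Eq:Main} has its vertices drawn from these pairs. Because $P$ is homogeneous of degree two with no constant or linear part, the inner sum in \eqref{Eq:DefWeight} collapses to length $s=2$; thus $W_P(\{i,j\},\{i',j'\})$ is nonzero only when the two pairs share a single common index $i$, in which case --- writing $j,k$ for the other two --- the weight equals $c_{ijk}\langle\alpha_{[ik]},e_{[ij]}\rangle$. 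Each such oriented edge of $G$ is naturally encoded by a triangular face $\Phi$ on the vertices $i,j,k$ whose two internal sides are $\{i,j\}$ and $\{i,k\}$ and whose boundary side is $\{j,k\}$.

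The adjacency rule of a DOOMB --- the target of one edge equals the source of the next --- matches exactly the compatibility of face orientations of Lemma~\ref{Lm:CycleChain}: the common side is the \emph{second} internal side of the earlier face and the \emph{first} of the next. Cycles of $G$ therefore correspond to compatibly oriented cycle polyhedra, and chains of $G$ to chain polyhedra with chosen initial and terminal sides, so the DOOMBs contributing nontrivially are in bijection with the elements of $\mathcal{CP}_{n,k}$ (isolated vertices of $G$ correspond to pairs not used in $H$ and affect neither the weight nor the determinant). Under this bijection $W_P(G)$ becomes precisely $\prod_\Phi c_{v_3(\Phi)v_1(\Phi)v_2(\Phi)}\langle\alpha_{[s_1(\Phi)]},e_{[s_2(\Phi)]}\rangle$, the first product in \eqref{Eq:Level2}; the skew-symmetry $c_{ijk}=-c_{ikj}$ ensures that the two compatible orientations of each cycle polyhedron give matching contributions and absorbs the ambiguity in ordering the internal sides.

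The main obstacle is the Gram-like determinant $\det(\langle\alpha_{d_1^-},e_{d_2^+}\rangle)$ from \eqref{Eq:Main}. Its rows are indexed by the sources of DOOMB edges, namely the non-terminal essential sides $H^T=\name{ess}_1(H)\setminus\{T_j(H)\}$, and its columns by the targets $H^I=\name{ess}_1(H)\setminus\{I_i(H)\}$. One must show that this determinant equals $\det M(H)\cdot\prod\phi_{pq}$ when $H^I$ and $H^T$ are mixed forests, and vanishes otherwise. The strategy is a matrix-tree-style reduction: cofactor expansion along leaves of the tree components of $H^I$ and $H^T$ telescopes the four-term inner products $\langle u_i-\phi_{ji}u_j,u_k-\phi_{kl}u_l\rangle$ and extracts a product of $\phi$-factors along the tree edges, while each one-cycle component reduces to its holonomy-dependent factor exactly as in the proof of Theorem~\ref{Th:Forman}. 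After all tree collapses are carried out, what remains is a $t\times t$ determinant indexed by the chosen roots $r_i^\alpha$ and $r_j^e$, whose entries are precisely the weighted path sums $\sum_{\Lambda\in L_{ij}}\phi_\Lambda^2 n_\Lambda$ defining $M(H)$.

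The technical heart will be keeping accurate track during this elimination of the signs from face orientations, of the connection factors $\phi_{ij}$ (which appear once in each of $\alpha_{ij}$ and $e_{ij}$ and should combine to $\phi_\Lambda^2$ along the root-to-root paths), and of the cofactor signs. The duality identity of Lemma~\ref{Lm:OrthComp} should play the key role in converting the determinantal sum over vertices of the tree components into the path sum appearing in the definition of $M(H)$; the complete execution of the plan will occupy \S\ref{SSec:Level2}.
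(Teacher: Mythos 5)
Your combinatorial translation --- vertices of the DOOMB as unordered pairs, the degree-two monomials of \eqref{Eq:Quadr} forcing adjacent pairs to share exactly one index, edges of $G$ as oriented triangular faces, cycles and chains of $G$ as cycle and chain polyhedra with compatibly oriented faces, and the resulting weight $\prod_\Phi c_{v_3v_1v_2}\langle\alpha_{[s_1]},e_{[s_2]}\rangle$ --- is exactly the first half of the paper's argument and is correct. The gap is in the determinant. You state the target identity $\det(\langle\alpha_{d_1^-},e_{d_2^+}\rangle)=\det M(H)\cdot\prod\phi_{pq}$ but do not prove it, and the route you lead with (cofactor expansion along leaves, ``exactly as in the proof of Theorem \ref{Th:Forman}'') does not transfer: in Theorem \ref{Th:Forman} the rows and columns of $Q_F$ are indexed by the \emph{same} edge set, whereas here they are indexed by two different graphs, $\name{ess}_1(H)$ minus the terminal edges and $\name{ess}_1(H)$ minus the initial edges, so there is no common leaf to peel; moreover no elimination of this kind visibly produces the factors $n_\Lambda$ (the number of vertices shared by $H_i^I$ and $H_j^T$ along $\Lambda$) sitting inside the entries of $M(H)$.

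You do name Lemma \ref{Lm:OrthComp} as the key, and that is indeed the paper's tool, but the mechanism of its use is what is missing. The paper writes $\det G(A_{H^I},E_{H^T})^2=\angle(\langle A_{H^I}\rangle^\perp,\langle E_{H^T}\rangle^\perp)\cdot\det G(A_{H^I},A_{H^I})\,\det G(E_{H^T},E_{H^T})$ and then needs a further computation (Lemma \ref{Lm:OCAlpha}, proved only in Section \ref{SSec:Level2}): for each tree component the orthogonal complement of the span of the $\alpha$'s (resp.\ the $e$'s) is \emph{one-dimensional}, spanned by the explicit holonomy-weighted vector $b_H=\sum_u u\,\phi_{ur}$ (resp.\ $f_H=\sum_u u\,\phi_{ru}$), while one-cycle components contribute nothing to the complement. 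The pairings $(b_{H_i^I},f_{H_j^T})=\sum_{\Lambda\in L_{ij}}\phi_\Lambda^2 n_\Lambda$ are, entry by entry, the matrix $M(H)$ --- this is where $n_\Lambda$ comes from --- and the normalizing Gram determinants $\det G(A,A)$, $\det G(E,E)$ are supplied by Lemmas \ref{Lm:DetAA} and \ref{Lm:PQRCycle}, which is what yields the product of $\phi_{pq}$ over the root-to-root paths. Without identifying these one-dimensional complements, and without addressing the sign lost in squaring the determinant, the plan as written cannot be completed.
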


See Remark \ref{Rm:Order} for the notation $\alpha_{[s_1]}$, $e_{[s_2]}$.

\section{Proofs}\label{Sec:Proofs}

\subsection{Technical lemmas}\label{SSec:Tech}

Let $H$ be a directed graph without loops or multiple edges. 

 \begin{Definition}[\cite{Kenyon}]
A {\em line bundle with connection on $H$} is a function a number 
$\phi_{ij} \ne 0$ to every directed edge $(i,j)$ of the graph. By 
definition, also take $\phi_{ji} = \phi_{ij}^{-1}$.
 \end{Definition}

To explain the name, attach a one-dimensional space $\Real$ (a fiber of 
the bundle) to every vertex and interpret the number $\phi_{ij}$ as the $1 
\times 1$-matrix of the operator of parallel transport along the edge 
$(i,j)$. Consider the space $\Real^n$ (which can be interpreted as a total 
space of the bundle) with the standard basis $u_1, \dots, u_n$. For every 
$i$, $j$ consider the vectors $\alpha_{ij} \bydef u_i - \phi_{ij} u_j$ and 
$e_{ij} \bydef u_i - \phi_{ji} u_j$. Denote by $A_H$ and $E_H$ the sets of 
vectors $\alpha_{ij}$ or $e_{ij}$, respectively, where $(i,j)$ runs through 
the edges of $H$.

Introduce in $\Real^n$ a standard scalar product making $u_1, \dots, u_n$ 
an orthonormal basis. For two sequences of vectors $M = (\mu_1, \dots, 
\mu_k)$ and $X = (\xi_1, \dots, \xi_k)$ of the same size $k$ denote by 
$G(M,X)$ the $k \times k$-matrix where the $(i,j)$-th entry is equal to the 
scalar product $(\mu_i,\xi_j)$. Call two sequences, $M$ and $M'$, 
elementarily equivalent, if $M'$ can be obtained from $M$ by a finite 
number of substitutions $\mu_i \mapsto \mu_i + t \mu_j$ where $t$ is any 
number. The matrices $G(M,X)$ and $G(M',X)$ are also connected by 
elementary equivalence (a row is replaced by the sum of itself with a 
multiple of another row), and therefore $\det G(M,X) = \det G(M',X)$; the 
same applies to $X$. 

Number the edges of $H$ arbitrarily and consider the matrices $P_H \bydef 
G(A_H,A_H)$, $Q_H = \bydef G(A_H,E_H)$, $R_H = \bydef G(E_H,E_H)$. The 
matrix element of $P_H$, $Q_H$ and $R_H$ corresponding to the pair of edges 
$s$ and $t$ depends on their mutual position as follows:
 \begin{itemize}
\item If $s$ and $t$ do not intersect then the matrix element is 
zero.

\item If $s$ and $t$ have one common vertex $b$ then the matrix element is 
equal to the product $\psi_{s,a} \psi_{t,a}$ where 
 \begin{equation*}
\psi_{r,a} = \begin{cases}
1, &\text{if the edge $r$ points away from the vertex $a$}, \\
-\phi_r, &\text{if the edge $r$ points towards the vertex $a$} \\
&\text{and corresponds to the vector $\alpha$},\\
-\phi_r^{-1}, &\text{if the edge $r$ points towards the vertex $a$}\\
&\text{and corresponds to the vector $e$}.
\end{cases}
 \end{equation*}

\item If the edges have two common vertices $a$ and $b$ then the product is 
equal to the sum $\psi_{s,a} \psi_{t,a} + \psi_{s,b} \psi_{t,b}$.
 \end{itemize}

The most important cases later will be when $H$ is a tree or a graph with 
one cycle Describe the determinants of $P_H$, $Q_H$ and $R_H$ for 
such $H$. For a directed path $\Lambda = (\lambda_0, \dots, \lambda_k)$ in 
the graph denote by $\phi_\Lambda$ the product $\phi_{\lambda_0 \lambda_1} 
\dots \phi_{\lambda_{k-1} \lambda_k}$.

 \begin{lemma} \label{Lm:DetAA}
Let $H$ be a rooted tree. Then 
 \begin{equation}\label{Eq:PTree}
\det P_H = \prod_{i,j} \phi_{ij}^2 \cdot \sum_\Lambda \phi_\Lambda^2
 \end{equation}
where the product is taken over all the edges of $H$ directed away from the 
root, and and the sum, over all chains $\Lambda$ joining vertices of the 
tree with the root. Similarly, 
 \begin{equation*}
\det R_H = \prod_{i,j} \phi_{ji}^2 \cdot \sum_\Lambda \phi_\Lambda^{-2}
 \end{equation*}
in the same notation.
 \end{lemma}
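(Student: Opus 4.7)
The approach is the Cauchy--Binet formula. Arrange the vectors $\alpha_{ij}$ as columns of an $n\times(n-1)$ matrix $A_H$; then $P_H=A_H^{T}A_H$, so
\[
\det P_H=\sum_{r'\in V(H)}\bigl(\det A_H^{(r')}\bigr)^{2},
\]
where $A_H^{(r')}$ is the square minor obtained by deleting the row indexed by $r'$.

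To evaluate $\det A_H^{(r')}$, I reinterpret $r'$ as the root of $H$. In this re-rooted tree every vertex $v\ne r'$ has a unique parent-edge $e_v$, establishing a bijection between the surviving rows (vertices $v\ne r'$) and the columns (edges of $H$). Ordering both rows and columns by a depth-first traversal from $r'$, the matrix $A_H^{(r')}$ becomes upper triangular, because row $v$ has nonzero entries only in column $e_v$ and in the columns $e_c$ indexed by children $c$ of $v$. The diagonal entry at $(v,e_v)$ equals $-\phi_{\pi(v),v}$ when $H$ orients $e_v$ from the parent $\pi(v)$ to $v$, and $+1$ otherwise; squaring and multiplying gives
\[
\bigl(\det A_H^{(r')}\bigr)^{2}=X_{r'}:=\prod_{e\in E^{\mathrm{away}}_{r'}}\phi_e^{2},
\]
where $E^{\mathrm{away}}_{r'}$ denotes the set of edges of $H$ whose $H$-orientation coincides with the parent-to-child orientation in the tree re-rooted at $r'$.

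To finish, compare $X_{r'}$ with $X_r$ for the distinguished root $r$. Switching the root from $r$ to $r'$ flips the ``away from root'' status precisely for those edges lying along the unique path $\Gamma$ from $r$ to $r'$; a short case analysis, handled separately in each of the two possible $H$-orientations of such an edge, shows that its contribution to $X_{r'}/X_r$ is $\phi^{-2}$, where $\phi$ is the weight of that edge read in the $r\to r'$ direction. Hence $X_{r'}/X_r=\phi_\Lambda^{2}$ with $\Lambda$ the chain from $r'$ to $r$, and summing over $r'$ with $X_r=\prod_{e\in E^{\mathrm{away}}_r}\phi_e^{2}$ factored out produces \eqref{Eq:PTree}. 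The second formula follows by applying the same argument to $e_{ij}=u_i-\phi_{ji}u_j=u_i-\phi_{ij}^{-1}u_j$, which replaces $\phi$ by $\phi^{-1}$ throughout. The main bookkeeping hurdle is the orientation sign-tracking in the triangularization step; everything downstream is essentially cancellation of $\phi$-factors against reorientations of edges along $\Gamma$.
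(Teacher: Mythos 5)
Your proof is correct, but it follows a genuinely different route from the paper's. The paper fixes the root once, reduces to the case where all edges point away from it (using $\alpha_{ji}=\phi_{ji}\alpha_{ij}$), and then runs an induction on a hanging edge $p$ with parent $q$, expanding $\det P_H$ by rows to get a three-term recursion relating $H$, $H\setminus p$ and $H\setminus\{p,q\}$, and finally matching the resulting terms with chains ending at the root. You instead invoke Cauchy--Binet (which is exactly the paper's own Lemma \ref{Lm:Gram}, stated a few lines later in the same section), so that $\det P_H$ becomes a sum over deleted vertices $r'$ of squared maximal minors of the reduced incidence-type matrix; each minor is evaluated in closed form by re-rooting at $r'$ and triangularizing via the vertex--parent-edge bijection, and the final bookkeeping identifies the ratio $X_{r'}/X_r$ with $\phi_\Lambda^2$ for the chain $\Lambda$ from $r'$ to the root. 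I checked the orientation conventions (the diagonal entries $-\phi_{\pi(v),v}$ versus $1$, and the edge-by-edge case analysis along the path $\Gamma$, which in both orientations contributes $\phi^2$ read toward the root), and they come out right, including the empty chain for $r'=r$. Your argument has the advantage of being non-inductive and of exhibiting a clean bijection between the summands and the vertices (equivalently, the chains $\Lambda$); it also sidesteps a soft spot in the paper's induction, namely the claim that $(\alpha_p,\alpha_x)=0$ for all edges $x$ other than $p$ and its parent, which fails when the upper endpoint of $p$ has several children. The paper's recursion, on the other hand, is self-contained and does not need the Gram/Cauchy--Binet identity.
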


 \begin{proof}
Clearly, the statements for $P_H$ and $R_H$ are equivalent, so consider the 
$P_H$ case. Since $\alpha_{ji} = \phi_{ji} \alpha_{ij}$, changing the 
direction of an edge $(i,j) \mapsto (j,i)$ multiplies both sides of 
\eqref{Eq:PTree} by $\phi_{ji}^2$. So, it is enough to prove the lemma for 
trees where all the edges are directed away from the root.

Let $p$ be a hanging edge of $H$ and $q$, its parent. Then 
$(\alpha_p,\alpha_p) = 1 + \phi_p^2$, $(\alpha_p, \alpha_q) = -\phi_q$ 
(the edge $q$ is directed towards $p$, and $p$, away from $q$), and 
$(\alpha_p,\alpha_x) = 0$ for all the other edges $x$. Develop the 
determinant $\det P_H$ by the row $p$ and then by the row $q$; this gives a 
relation 
 \begin{equation}\label{Eq:Recurs}
\det P_H = (1 + 1/\phi_p^2) \det P_{H'} - 1/\phi_q^2 \cdot \det P_{H''}
 \end{equation}
where $H'$ and $H''$ are $H$ with $p$ deleted and $p$ and $q$ deleted, 
respectively.

Suppose by induction that the theorem is proved for $H'$ and 
$H''$. Multiply both sides of \eqref{Eq:Recurs} by $\prod_e \phi_e^2$ 
where $e$ runs through the set of edges of $\Gamma$. It gives
 \begin{equation*}
\prod_e \phi_e^2 \det P_H = \sum_{\Lambda'} \phi_{\Lambda'}^2 + 1/\phi_p^2 
\cdot \sum_{\Lambda'} \phi_{\Lambda'}^2 - 1/\phi_p^2 \sum_{\Lambda''} 
\phi_{\Lambda''}^2
 \end{equation*}
where $\Lambda'$, $\Lambda''$ are chains in $H'$ and $H''$, respectively, 
joining vertices with the root. The first sum is over all the chains in 
$\Gamma$ not containing $p$. The second sum is over all the chains 
containing $p$, plus the union of $p$ with a chain in $\Gamma''$ (because 
$p$ is hanging). The third sum cancels the second term.
 \end{proof}

 \begin{lemma} \label{Lm:QTree}
If $H$ is a tree with $m$ edges (and $m+1$ vertices) then $\det Q_H = m+1$. 
 \end{lemma}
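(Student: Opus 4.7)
My plan is to apply the Cauchy--Binet formula to the factorization $Q_H = A_H^T E_H$, where $A_H$ and $E_H$ are the $n \times m$ matrices whose columns are the vectors $\alpha_e$ and $e_e$ respectively ($e$ ranging over the $m$ edges of $H$, with $n = m+1$ the number of vertices). Cauchy--Binet gives
\[
\det Q_H \;=\; \sum_{v} (\det A_{\hat v})\,(\det E_{\hat v}),
\]
the sum running over vertices $v$, with $A_{\hat v}$ and $E_{\hat v}$ denoting the $m \times m$ matrices obtained by deleting the row indexed by $v$. The target is then to show that each summand equals $1$, so that the total is $n = m+1$.

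The key preliminary is a gauge invariance: reorienting a single edge $(i,j)\mapsto(j,i)$ multiplies the corresponding column of $A_H$ by $-\phi_{ji}$ and the corresponding column of $E_H$ by $-\phi_{ij}$, so each individual product $(\det A_{\hat v})(\det E_{\hat v})$ is unaffected. This lets me, for each $v$ separately, first root the tree at $v$ and reorient every edge to point away from the root. With this preferred orientation I would enumerate the non-root vertices as $v_1,\dots,v_m$ so that every parent precedes its descendants, and label edges $e_k = (\pi(v_k), v_k)$ accordingly. Then the column of $A_{\hat v}$ for $e_k$ has entry $-\phi_{\pi(v_k)v_k}$ at row $v_k$ and (when $\pi(v_k)\ne v$) an entry $1$ at row $\pi(v_k)$, all other entries vanishing; since parents precede children this makes $A_{\hat v}$ upper triangular, with determinant $(-1)^m\prod_k\phi_{\pi(v_k)v_k}$. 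The identical analysis for $E_{\hat v}$ gives $(-1)^m\prod_k\phi_{\pi(v_k)v_k}^{-1}$, and the two factors multiply to $1$. Summing over the $n = m+1$ vertices delivers the claim.

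I do not foresee any serious obstacle; the only point requiring care is sign bookkeeping, but since $A_H$ and $E_H$ share a single fixed vertex ordering, Cauchy--Binet introduces no relative signs between the two factors, and any permutation of the $v_k$ is applied identically to rows and columns of each triangular block. An inductive approach along a hanging edge, mirroring the proof of Lemma \ref{Lm:DetAA}, would also work, but becomes slightly awkward whenever the hanging edge's internal endpoint has high degree; the Cauchy--Binet route sidesteps this and makes the answer $n = m+1$ appear as a literal count of vertices.
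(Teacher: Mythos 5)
Your proof is correct, but it takes a genuinely different route from the paper's. You factor $Q_H = A_H^T E_H$ and apply Cauchy--Binet, so that $\det Q_H$ becomes a sum over the $m+1$ vertices $v$ of the products $\det A_{\hat v}\cdot\det E_{\hat v}$; the gauge observation ($\alpha_{ji}=-\phi_{ji}\alpha_{ij}$, $e_{ji}=-\phi_{ij}e_{ij}$, so each product of corresponding minors is orientation-independent) together with the triangularity of each minor for the outward orientation rooted at $v$ makes every summand equal to $1$, and the answer $m+1$ appears as a literal count of vertices. The paper instead first shows that $\det Q_H$ is independent of all the connection coefficients $\phi_{pq}$ (by expanding along the row and column of an arbitrary edge $(i,j)$ and checking that the surviving cofactors carry no $\phi_{ij}$), then sets $\phi\equiv 1$, uses elementary equivalences $e_{pr}=e_{pq}+e_{qr}$ to deform $H$ into a path, and evaluates the resulting tridiagonal determinant by induction. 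Your argument is more self-contained and explains the value $m+1$ structurally, at the cost of invoking Cauchy--Binet and some permutation bookkeeping (which you handle correctly: the row and column permutations are applied identically to both minors, so their signs cancel in the product); the paper's argument avoids Cauchy--Binet entirely but relies on the slightly delicate vanishing of the mixed cofactors and on the reduction to a path. Both are valid, and your closing remark that an induction on a hanging edge in the style of Lemma \ref{Lm:DetAA} would also work is accurate.
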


 \begin{proof}
Let $(i,j)$ be an edge of $H$. The graph $H \setminus (i,j)$ is a union of 
two trees $H_1$ and $H_2$ containing vertices $i$ and $j$, respectively. 
The matrix $Q_H$ looks like 
 \begin{equation*}
Q_H = \left(\begin{array}{c|ccc|ccc}
2 & 1 & \dots & 1 & -\phi_{ji} & \dots & -\phi_{ji}\\
\hline
1 &  &  &  &  &  & \\
\vdots & & Q_{H_1} & & & 0 & \\
1 &  &  &  &  &  & \\
\hline
-\phi_{ij} &  &  &  &  &  & \\
\vdots & & 0 & & & Q_{H_2} & \\
-\phi_{ij} &  &  &  &  &  & 
\end{array}
\right)
 \end{equation*}
(we suppose that the edge $(i,j)$ corresponds to the first row and the 
first column). 

Denote the sizes of the first and the second block (i.e.\ the numbers of 
edges in $H_1$ and $H_2$) by $m_1$ and $m_2$ respectively (thus, $m = m_1 + 
m_2 + 1$). Let $p > m_1 > q$. Then the last $m_2$ rows of the minor 
$[Q_H]_{1,p+1}^{1,q+1}$ (deleted columns $1$ and $p+1$ and rows $1$ and 
$q+1$) have zeros at the $m_1$ initial positions; only the last $m - 2 - 
m_1 = m_2-1$ positions may be nonzero. Hence the rows are linearly 
dependent, so that $\det [Q_H]_{1,p+1}^{1,q+1} = 0$. The same is true if $p 
< m_1 < q$. Having $\det Q_H$ decomposed by the first row and then by the 
first column, one is left only with the terms $(Q_H)_{1,p+1} (Q_H)_{q+1,1} 
\det [Q_H]_{1,p+1}^{1,q+1}$ where either $p, q \le m_1$ or $p,q \ge m_1+1$. 
In both cases $(Q_H)_{1,p+1} (Q_H)_{q+1,1} = 1$, and therefore $\det Q_H$ 
does not depend on $\phi_{ij}$ (recall that $Q_{H_1}$ and $Q_{H_2}$ contain 
no $\phi_{ij}$ because neither $H_1$ nor $H_2$ have an edge $(i,j)$). Since 
$(i,j)$ is just an arbitrary edge of $H$, it proves that $\det Q_H$ does 
not depend on any $\phi_{pq}$ and is a constant.

Suppose now that $\phi_{pq} = 1$ for all $p$ and $q$. Then $e_{pr} = u_p - 
u_r = (u_p - u_q) + (u_q - u_r) = e_{pq} + e_{qr}$, and the same is true 
for $\alpha$. Suppose the tree $H$ contains edges $pq$ and $qr$. Consider a 
tree $H'$ where the edge $qr$ is replaced by $pr$; then the systems of 
vectors $A_H$ and $A_{H'}$, as well as $E_H$ and $E_{H'}$, differ by an 
elementary transformation, and therefore $\det Q_H = \det Q_{H'}$. By such 
operations one can convert $H$ into a line, i.e.\ a tree with the edges 
$(p_0, p_1), (p_1, p_2), \dots, (p_{m-1}, p_m)$. The matrix $Q_H$ for such 
tree is: $(Q_H)_{ii} = 2$, $(Q_H)_{i,i+1} = (Q_H)_{i,i-1} = 1$ for all $i$. 
An easy induction shows that $\det Q_H = m+1$.
 \end{proof}

Let now $H$ be a graph with one cycle, i.e.\ a connected graph with $n$ 
vertices and $n$ directed edges. It consists of a cycle $p_1, \dots, p_s$ 
and, possibly, some trees (``antlers'') attached to the vertices $p_i$. The 
direction of edges in the cycle and in the antlers can be arbitrary. 
Following \cite{Kenyon}, call the {\em holonomy} of the cycle the product 
$w \bydef \phi_{p_1 p_2}^{\pm 1} \dots \phi_{p_s p_1}^{\pm 1}$ where the 
$i$-th exponent is $+1$ if the corresponding edge is directed along the 
cycle (from $p_i$ to $p_{i+1}$) and $-1$ otherwise.

 \begin{lemma} \label{Lm:PQRCycle}
Let $H$ be a graph with one cycle. Then
 \begin{align*}
\det P_H &= (1-w)^2 \prod_{i,j} \phi_{ij}^2,\\
\det R_H &= (1-w^{-1})^2 \prod_{i,j} \phi_{ij}^{-2},\\
\det Q_H &= -(1-w)(1-w^{-1}),
 \end{align*}
where the product is taken over the set of all the edges in the antlers 
directed away from the cycle. 
 \end{lemma}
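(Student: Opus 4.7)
The plan is to express $P_H$, $Q_H$, $R_H$ as products of $n\times n$ matrices and then compute their determinants by stripping antler leaves to reduce to the bare cycle.

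Let $A$ and $E$ be the $n\times n$ matrices whose columns, in the orthonormal basis $u_1,\dots,u_n$, are the vectors $\alpha_{ij}$ and $e_{ij}$ indexed by edges of $H$. Then $P_H=A^TA$, $Q_H=A^TE$ and $R_H=E^TE$, so it suffices to compute $\det A$ and $\det E$. Using $\alpha_{ji}=-\phi_{ji}\alpha_{ij}$ and the analogous relation for $e$, I would first reverse edges so that every antler edge points away from the cycle and every cycle edge is oriented along the cycle; each reversal multiplies the corresponding column, hence the determinant, by a prescribed scalar that can be restored at the end.

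With this normalization, any antler leaf $v$ has a unique incident edge $(u,v)$, and the row of $A$ corresponding to $u_v$ has a single non-zero entry $-\phi_{uv}$, located in the column of that edge. Cofactor expansion along this row gives $\det A=\pm\phi_{uv}\det A'$, where $A'$ is the analogous matrix for $H\setminus\{v\}$. Iterating peels off all the antlers, contributing an overall factor $\pm\prod_e\phi_e$, with $e$ ranging over antler edges directed away from the cycle, and reducing the problem to the matrix $A_{\text{cycle}}$ of the bare cycle.

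For the base case I would compute $\det A_{\text{cycle}}$ directly: enumerating the $s$ cycle vertices $p_1,\dots,p_s$ and edges $e_i=(p_i,p_{i+1})$ cyclically, $A_{\text{cycle}}$ is the $s\times s$ matrix with $1$'s on the diagonal, subdiagonal entries $-\phi_{e_1},\dots,-\phi_{e_{s-1}}$, and a single upper-right corner entry $-\phi_{e_s}$. A cofactor expansion along the first column (or an easy induction on $s$) yields $\det A_{\text{cycle}}=1-\phi_{e_1}\cdots\phi_{e_s}=1-w$, and the identical computation for $E_{\text{cycle}}$, with each $\phi$ replaced by its inverse, gives $1-w^{-1}$. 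Squaring $\det A$ and $\det E$, and multiplying them for $Q_H$, then produces the three stated formulas; the $\phi$-products cancel in $\det Q_H$ since $\phi_e\cdot\phi_e^{-1}=1$.

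The main obstacle I expect is the careful sign audit: the cumulative $\pm$ arising from the leaf-stripping, the cofactor expansions, and the orientation normalizations has to combine to the $+$ claimed in $\det P_H$ and $\det R_H$ and to the $-$ claimed in $\det Q_H$. Since these signs are independent of the actual values of the $\phi_{ij}$, they are completely pinned down by a single explicit specialization (for example the triangle case $s=3$ with arbitrary $\phi_1,\phi_2,\phi_3$), so after that bookkeeping the general formulas follow from the structural argument above.
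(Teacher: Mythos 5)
Your proposal is correct in substance but takes a genuinely different route from the paper. The paper works directly with the Gram matrices: it replaces the systems $A_H$, $E_H$ by elementarily equivalent ones (telescoping around the cycle to isolate a vector proportional to $(1-1/w)u_1$, then clearing the antlers) until $G(A_H',E_H')$ is diagonal with $(1-w)(1-1/w)$ in one corner and $1$'s elsewhere. You instead exploit the fact that a one-cycle graph has as many edges as vertices, so the matrices $A$, $E$ of column vectors are square and $\det P_H=(\det A)^2$, $\det Q_H=\det A\cdot\det E$, $\det R_H=(\det E)^2$. This factorization is unavailable in the tree case (Lemma \ref{Lm:QTree}), which is why the paper's elementary-equivalence method is the more uniform one; but for the present lemma your route is shorter, and it makes the multiplicative structure of the answer --- and the cancellation of the $\phi$'s in $\det Q_H$ --- transparent. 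The leaf-stripping step and the circulant computation $\det A_{\mathrm{cycle}}=1-w$ are both correct.

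Two remarks on the sign discussion, which is the only soft spot. First, the claim that the cumulative sign is ``pinned down by a single explicit specialization'' is not quite right: the sign could a priori depend on the combinatorics of $H$ (number of antler edges, length of the cycle), so one example does not settle all graphs. But you do not actually need a specialization. In $\det P_H$ and $\det R_H$ every sign is squared away, and in $\det Q_H=\det A\cdot\det E$ each leaf-stripping step contributes the cofactor sign $(-1)^{r+c}$ at the \emph{same} position $(r,c)$ to both $\det A$ and $\det E$, so the signs cancel in pairs and you get exactly $\det Q_H=(1-w)(1-w^{-1})$. Second, do not try to audit your way to the minus sign in the statement: it is a typo. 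A direct check on the triangle with $\phi_{12}=\phi_{23}=\phi_{31}=t$ gives $\det Q_H=2-t^3-t^{-3}=(1-w)(1-1/w)$; the paper's own proof ends with a diagonal matrix of determinant $(1-w)(1-1/w)$; and Theorem \ref{Th:Forman} uses the factor $(1-w_j)(1-1/w_j)$ with a plus sign. Your method produces the correct (plus) sign automatically, so you should trust it rather than the stated formula.
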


 \begin{proof}
The proofs of all the three statements are similar; here is the proof of 
the statement about $Q_H$.

Consider a system of vectors $E_H^{(1)}$ elementarily equivalent to $E_H$. 
To obtain $E_H^{(1)}$ from $E_H$ replace every vector $\eps_i \bydef 
e_{p_i p_{i+1}} = u_{p_i} - \phi_{p_{i+1} 
p_i} u_{p_{i+1}}$, $i = 1, \dots, s$, with $\eps_i^{(1)}$ where
 \begin{align*}
&\eps_{s-1}^{(1)} \bydef \eps_{s-1} + \phi_{p_s, p_{s-1}} 
\eps_s = u_{s-1} - \phi_{p_s, p_{s-1}} \phi_{p_1, 
p_s} u_1,\\
&\eps_{s-2}^{(1)} \bydef \eps_{s-2} + \phi_{p_{s-1}, p_{s-2}} 
\eps_{s-1}^{(1)} = u_{s-2} - \phi_{p_{s-1}, p_{s-2}} 
\phi_{p_s, p_{s-1}} \phi_{p_1, p_s} u_1,\\
&\vdots\\
&\eps_1^{(1)} = u_1 (1-\phi_{p_2 p_1} \dots \phi_{p_1, 
p_s}) = (1-1/w) u_1.
 \end{align*}
Consider also a system $A_H^{(1)}$ obtained from $A_H$ in a similar way. 
One has $\det G(E^{(1)},A_H^{(1)}) = Q_H$ by elementary equivalence.

If $w = 1$ then $\eps_1^{(1)} = 0$, so $\det Q_H = \det 
G(E^{(1)},A_H^{(1)}) = 0$, and the lemma is proved; we suppose from now on 
that $w \ne 1$.

Consider now the sequence $E_H^{(2)}$, which is $E_H^{(1)}$ with every 
$\eps_i^{(1)}$ replaced with
 \begin{equation*}
\eps_i^{(2)} = \eps_i^{(1)} - \eps_1^{(1)}/(1-1/w) = -\phi_{p_i,p_{i-1}} 
u_i.
 \end{equation*}
Consider also $A_H^{(2)}$ obtained from $A_H^{(1)}$ in a similar way. The 
sequences $E_H^{(2)}$, $A_H^{(2)}$ are elementarily equivalent to 
$E_H^{(1)}$, $A_H^{(1)}$, so $\det G(E_H^{(2)}, A_H^{(2)}) = Q_H$.

Denote by $H_i$ a subtree of $H$ attached to the vertex $p_i$, $1 \le i \le 
s$, and let $\beta_j = e_{q_j r_j}$, $1 \le j \le m_i$ be vectors in $E_H$ 
(and $E_H^{(2)}$) corresponding to its edges. For all edges attached 
immediately to $p_i$ (so that $q_j = p_i$) replace $\beta_j \mapsto 
\beta_j^{(1)} = \beta_j + \eps_i \phi_{p_{i+1},p_i} = -\phi_{r_j p_1} 
u_{r_j}$. Then do the same for all the edges attached to endpoints of 
$\beta_j^{(1)}$, etc. Having done this for all $i$, $1 \le i \le s$, one 
obtaines the system $E_H' = ((1-1/w)u_1, -\phi_{12} u_2, \dots, -\phi_{1m} 
u_m)$ elementarily equivalent to $E_H$. Similarly, $A_H$ is elementarily 
equivalent to $A_H' = ((1-w)u_1, -\phi_{12} u_2, \dots, -\phi_{m1} u_m)$. 
Now $\det Q_H = \det G(E_H',A_H')$; the matrix $G(E_H',A_H')$ is a diagonal 
matrix with $(1-w)(1-1/w)$ in the corner and $1$ in all the other positions 
on the main diagonal.
 \end{proof}

Below we will need two more statements from the general linear algebra. 

 \begin{lemma} \label{Lm:Gram}
Let $e_1, \dots, e_n$ be an orthonormal basis in $\Real^n$, and $\mu_i = 
\sum_{j=1}^n a_{ij} e_j$. Then $\det G(M,M)$ is equal to the sum of squares 
of all the $k \times k$-minors of the matrix $A = (a_{ij})_{1 \le i \le 
k}^{1 \le j \le n}$.
 \end{lemma}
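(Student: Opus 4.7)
The plan is to recognize $\det G(M,M)$ as $\det(AA^T)$ and invoke the Cauchy--Binet formula. Since $e_1, \dots, e_n$ is orthonormal, one has
$$G(M,M)_{ij} = \langle \mu_i, \mu_j \rangle = \sum_{l=1}^n a_{il} a_{jl},$$
which is exactly the $(i,j)$-entry of $AA^T$. Thus $\det G(M,M) = \det(AA^T)$, and the whole problem reduces to computing this latter determinant.

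Next I would apply the Cauchy--Binet formula: for any $k \times n$ matrix $B$ and $n \times k$ matrix $C$,
$$\det(BC) = \sum_{\substack{S \subset \{1,\dots,n\} \\ |S|=k}} \det(B_{\cdot,S}) \det(C_{S,\cdot}),$$
where $B_{\cdot,S}$ denotes the $k \times k$ submatrix obtained by keeping the columns indexed by $S$ and $C_{S,\cdot}$ the submatrix with rows indexed by $S$. Specializing to $B = A$ and $C = A^T$, I get $\det(C_{S,\cdot}) = \det((A_{\cdot,S})^T) = \det(A_{\cdot,S})$, so
$$\det(AA^T) = \sum_S \det(A_{\cdot,S})^2,$$
which is exactly the claimed sum of squares of $k \times k$-minors of $A$.

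There is essentially no obstacle to overcome here, since the lemma is just a repackaging of Cauchy--Binet; the only thing requiring any care is correctly identifying $G(M,M)$ with $AA^T$ via orthonormality. If a self-contained proof is preferred over citing Cauchy--Binet, one can argue with exterior algebra: expand
$$\mu_1 \wedge \cdots \wedge \mu_k = \sum_{j_1 < \cdots < j_k} \det\bigl(A_{\cdot,\{j_1,\dots,j_k\}}\bigr)\, e_{j_1} \wedge \cdots \wedge e_{j_k},$$
then take the squared length using the standard identity $\|\mu_1 \wedge \cdots \wedge \mu_k\|^2 = \det G(M,M)$ together with the fact that the basis vectors $e_{j_1} \wedge \cdots \wedge e_{j_k}$ form an orthonormal system in $\Lambda^k \Real^n$.
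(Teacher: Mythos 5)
Your proof is correct: the identification $G(M,M)=AA^T$ via orthonormality and the Cauchy--Binet expansion give exactly the claimed sum of squared $k\times k$-minors. The paper itself offers no proof of this lemma --- it only remarks that the statement is classical and cites Gantmacher --- and your argument is precisely the standard one that such a reference contains, so there is nothing to compare beyond noting that you have supplied the omitted details.
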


The lemma is classical, see e.g.\ \cite[IX\S5]{Gantmacher} for proof.

Let now $\mathcal M, \mathcal X \subset \Real^n$ be two linear subspaces of 
the same dimension $k$, and $M = (\mu_1, \dots, \mu_k)$ and $X = (\xi_1, 
\dots, \xi_k)$ be bases in them. Denote 
 \begin{equation*}
\angle(\mathcal M, \mathcal X) \bydef \det G(M,X)^2/(\det G(M,M) \cdot \det 
G(X,X)).
 \end{equation*}

\def \M {\mathcal M}
\def \X {\mathcal X}

 \begin{lemma} \label{Lm:OrthComp}
 \begin{enumerate}
\item $\angle(\M,\X)$ depends only on the subspaces and not 
on the choice of the bases $M$ and $X$ in them. 

\item $\angle(\M^\perp,\X^\perp) = \angle(\M,\X)$. 
 \end{enumerate}
 \end{lemma}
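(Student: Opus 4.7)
The plan is to recognize $\angle(\mathcal M, \mathcal X)$ as a ratio involving the $k$-vectors $e_{\mathcal M} \bydef \mu_1 \wedge \dots \wedge \mu_k$ and $e_{\mathcal X} \bydef \xi_1 \wedge \dots \wedge \xi_k$ in $\bigwedge^k \Real^n$, equipped with the induced scalar product. Under this identification $\det G(M,X) = \langle e_{\mathcal M}, e_{\mathcal X}\rangle$ and $\det G(M,M) = \|e_{\mathcal M}\|^2$, so $\angle(\mathcal M,\mathcal X) = \langle e_{\mathcal M}, e_{\mathcal X}\rangle^2 / (\|e_{\mathcal M}\|^2 \|e_{\mathcal X}\|^2)$ is literally the squared cosine of the angle between the lines $\Real e_{\mathcal M}$ and $\Real e_{\mathcal X}$.

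For part (1), a change of basis in $\mathcal M$ is given by an invertible matrix $C \in GL_k$: if $M' = MC$, then $G(M',X) = C^T G(M,X)$ and $G(M',M') = C^T G(M,M) C$. Hence $\det G(M,X)$ is rescaled by $\det C$ while $\det G(M,M)$ is rescaled by $(\det C)^2$, and the factor $(\det C)^2$ cancels between numerator and denominator. The same argument with the second basis handles $\mathcal X$, so $\angle$ depends only on $\mathcal M$ and $\mathcal X$.

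For part (2), I would use part (1) to choose orthonormal bases, so that $M,X$ are represented by $n \times k$ matrices $U,V$ with $U^TU = V^TV = I_k$, and $\angle(\mathcal M, \mathcal X) = \det(U^T V)^2$. Pick orthonormal bases $U^\perp, V^\perp$ of $\mathcal M^\perp, \mathcal X^\perp$; then $UU^T + U^\perp(U^\perp)^T = I_n$ and similarly for $V$. Using $V^\perp (V^\perp)^T = I_n - VV^T$ and then Sylvester's determinant identity $\det(I_{n-k} - A^T A) = \det(I_k - A A^T)$ applied with $A = V^T U^\perp$, I would compute
\begin{align*}
\det\bigl((U^\perp)^T V^\perp\bigr)^2
&= \det\bigl(I_{n-k} - (U^\perp)^T V V^T U^\perp\bigr) \\
&= \det\bigl(I_k - V^T U^\perp (U^\perp)^T V\bigr) \\
&= \det\bigl(I_k - V^T(I_n - UU^T)V\bigr) \\
&= \det(V^T U U^T V) = \det(U^T V)^2,
\end{align*}
which gives $\angle(\mathcal M^\perp, \mathcal X^\perp) = \angle(\mathcal M, \mathcal X)$.

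There is no serious obstacle; once part (1) lets us reduce to orthonormal bases, the whole of part (2) is a short algebraic manipulation. The only step requiring care is invoking Sylvester's identity correctly — equivalently, one could finesse it via the Hodge star $* : \bigwedge^k \Real^n \to \bigwedge^{n-k} \Real^n$ (an isometry sending $e_{\mathcal M}$ to a nonzero multiple of $e_{\mathcal M^\perp}$), which makes the second statement manifest but hides the elementary linear-algebra content.
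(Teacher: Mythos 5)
Your proof is correct, and part (1) coincides with the paper's argument (change of basis rescales $\det G(M,X)$ by $\det C$ and $\det G(M,M)$ by $(\det C)^2$, so the ratio is invariant). For part (2), however, you take a genuinely different route. The paper first reduces to the ``generic'' situation $\M_1\cap\M_2=0$, $\Real^n=\langle\M_1\cup\M_2\rangle$, $n=2k$, by splitting off the common orthogonal complement of the joint span and the common intersection (each step justified by part (1) and a block-diagonal Gram matrix); it then picks a basis of $\M_2$ adapted to $\M_1\oplus\M_1^\perp$, writes down an explicit basis of $\M_2^\perp$, and matches numerator and denominator using Lemma \ref{Lm:Gram} (the sum-of-squares-of-minors formula), which is the Cauchy--Binet ingredient it has already set up for other purposes. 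Your argument skips the entire reduction: representing orthonormal bases by matrices $U,V$ with orthonormal columns, using the projection identities $UU^T+U^\perp(U^\perp)^T=I_n$, and invoking $\det(I_{n-k}-A^TA)=\det(I_k-AA^T)$ with $A=V^TU^\perp$ gives $\det\bigl((U^\perp)^TV^\perp\bigr)^2=\det(U^TV)^2$ in one chain of equalities, valid without any genericity assumption (I checked the computation; the only elided step is $\det W^2=\det(WW^T)$ for square $W$). What each approach buys: the paper's proof is self-contained modulo its own Lemma \ref{Lm:Gram} but requires a multi-stage case analysis; yours is shorter and uniform but imports the Sylvester/Weinstein--Aronszajn identity (itself an easy block-determinant computation). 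Your closing remark about the Hodge star is also a legitimate one-line proof of part (2), since $*$ is an isometry of $\bigwedge^k\Real^n\to\bigwedge^{n-k}\Real^n$ carrying $e_{\M}$ to a unit multiple of $e_{\M^\perp}$.
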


 \begin{proof}
Let $N = (\nu_1, \dots, \nu_k)$ be another basis in $\M$; denote by $A = 
(a_{ij})$ the transfer matrix: $\nu_i = \sum_{j=1}^k a_{ij} \mu_j$. Then 
$G(N,X) = A G(M,X)$, and $G(N,N) = A G(M,M) A^*$; so $G(N,X)^2/(\det G(N,N) 
\cdot \det G(X,X)) = \det A^2 \det G(M,X)^2/(\det A^2 \cdot \det G(M,M) 
\cdot \det G(X,X)) = \angle(\M,\X)$; the same is for $X$.

Let now $\M_1, \M_2 \subset \Real^n$ be two spaces of equal dimension, and 
let $\M \bydef \langle \M_1 \cup \M_2\rangle$ be their linear hull. Then 
$\M_i^\perp = \M_i^{\perp,\M} \oplus \M^\perp$, $i = 1,2$; here $\perp$ means 
an orthogonal complement in $\Real^n$, and $(\perp,\M)$ means an orthogonal 
complement in $\M$. Choose an orthonormal basis $T = (\tau_1, \dots, 
\tau_q)$ in $\M^\perp$, and bases $\Lambda_i = (\lambda_1^{(i)}, \dots, 
\lambda_p^{(i)})$ in $\M_i^{\perp,\M}$, $i = 1,2$, normal to $\M^\perp$. 
Denote $Y_1 = (\lambda_1^{(1)}, \dots, \lambda_p^{(1)}, \tau_1, \dots, 
\tau_q)$ and $Y_2 = (\lambda_1^{(2)}, \dots, \lambda_p^{(2)}, \tau_1, 
\dots, \tau_q)$ are bases in $\M_1^\perp$ and $\M_2^\perp$, respectively. By 
the first statement of the theorem, $\angle(\M_1^\perp,\M_2^\perp) = \det 
G(Y_1,Y_2)^2/(\det G(Y_1.Y_1) \cdot \det G(Y_2,Y_2))$. The matrix 
$G(Y_1,Y_2)$ is block diagonal; its first block is the matrix 
$G(\Lambda_1,\Lambda_2)$, and the second block is the unit matrix $G(T,T)$; 
thus $\det G(Y_1,Y_2) = \det G(\Lambda_1,\Lambda_2)$. Similarly, $\det 
G(Y_1,Y_1) = \det G(\Lambda_1,\Lambda_1)$ and $\det G(Y_2,Y_2) = \det 
G(\Lambda_2,\Lambda_2)$. Hence, $\angle(\M_1^\perp,\M_2^\perp) = 
\angle(\M_1^{\perp,\M},\M_2^{\perp,\M})$. 

Let now $\X = \M_1 \cap \M_2 \ne 0$. A similar choice of a basis shows that 
$\angle(\M_1,\M_2) = \angle(\X_1,\X_2)$ where $\X_i = \X^{\perp,\M_i}$, $i = 1,2$. 
So, it is enough to prove the second statement of the theorem for the 
$k$-dimensional subspaces $\M_1, \M_2 \subset \Real^n$ such that $n = 2k$ and 
$\M_1 \cap \M_2 = 0$, so that $\Real^n = \langle \M_1 \cap \M_2\rangle$. To 
do this take an orthonormal basis $e_1, \dots, e_{2k}$ in $\Real^n$ such 
that $X_1 \bydef (e_1, \dots, e_k)$ is a basis in $\M_1$, and $X_2 \bydef 
(e_{k+1}, \dots, e_{2k})$ is a basis in $\M_1^\perp$. Let $F = (f_1, \dots, 
f_k)$ be a basis in $\M_2$, $f_i = \sum_{j=1}^k b_{ij} e_j + c_{ij} 
e_{j+k}$. Since $\M_1 \cap \M_2 = 0$, the matrix $C = (c_{ij})_{1 \le i \le 
k}^{1 \le j \le k}$ is nondegenerate. By elementary transformations of 
rows one can make $C$ a unit matrix; assume that $C = \name{id}$ from the 
very beginning, so that $f_i = \sum_{j=1}^k b_{ij} e_j + e_{i+k}$. Then 
$G(F,X_1) = B$, and therefore $\det G(F,X_1) = \Delta \bydef \det B$. By 
Lemma \ref{Lm:Gram}, the determinant $\det G(F,F)$ is equal to the sum of 
squares of all $k \times k$-minors of the $k \times (2k)$-matrix composed 
of two blocks, $B$ and the identity $k \times k$-matrix. It is easy to see 
that the latter sum is the sum of squares of all the minors of $B$ (of all 
sizes), including $\Delta^2$ and $1$ (the square of the empty minor). 

Consider now the vectors $h_i = -e_i + \sum_{j=1}^k b_{ji} e_{j+k}$, $i = 
1, \dots, k$. Apparently, $H = (h_1, \dots, h_k)$ is a basis in 
$\M_2^\perp$; also $G(H,X_2) = B^T = G(F,X_1)^T$, and also $\det G(H,H) = 
\det G(F,F)$ by Lemma \ref{Lm:Gram}. This proves the lemma.
 \end{proof}

\subsection{Proof of Theorem \ref{Th:Forman}}\label{SSec:MTree}

To apply Theorem \ref{Th:Main} note that the polynomial $P$ of 
\eqref{Eq:DefPLapl} has degree $1$, so every path involved should contain 
one vertex only. Therefore, the DOOMB $G$ must be a union of $k$ loops 
attached to the vertices $(i_1,j_1), \dots, (i_k,j_k)$. So the summation is 
over the set of unordered $k$-tuples $(i_1,j_1), \dots, (i_k,j_k)$ with $1 
\le i_s, j_s \le N$ for all $s$. In other words, the summation is over the 
set of graphs $F$ with the vertices $1, 2, \dots, n$ and $k$ unnumbered 
directed edges. 

Let $F_1, \dots, F_\ell$ be connected components of $F$. If the edges $d_1$ 
and $d_2$ belong to different components then $\langle \alpha_{d_1}, 
e_{d_2}\rangle = 0$. So the matrix $Q_F$ is block diagonal, and $\det Q_F = 
\det Q_{F_1} \dots \det Q_{F_\ell}$. If $F$ is a connected graph with $m$ 
edges $(i_1,j_1), \dots, (i_m,j_m)$ and $\mu < m$ vertices then the vectors 
$e_{i_1,j_1}, \dots, e_{i_m,j_m}$ belong to a vector space of dimension 
$\mu$ spanned by the corresponding basis elements $u_i$. Therefore they are 
linearly dependent, so that $\det Q_F = 0$. Thus, if $\det Q_F \ne 0$ then 
every connected component $F_i$ of $F$ should be either a tree (with $\mu = 
m+1$) or a graph with one cycle ($\mu = m$). So, $F$ is a mixed forest.

Theorem \ref{Th:Forman} now follows from Theorem \ref{Th:Main}, Lemma 
\ref{Lm:QTree} and Lemma \ref{Lm:PQRCycle}.

\subsection{Proof of Theorem \ref{Th:DetLevel2}}\label{SSec:Level2}

Apply Theorem \ref{Th:Main} to the operator $M$ of \eqref{Eq:MExpl}. Like 
in Section \ref{SSec:MTree}, vertices of the graph $G$ are pairs $(i,j)$, 
$1 \le i < j \le n$, that is, edges of a directed graph with the vertices 
$1, \dots, n$. The polynomial $P$ contains only terms $x_{ij} x_{ik}$; 
therefore if $(a,b)$ is an edge of $G$ then the pairs $a = \{i,j\}$ and $b 
= \{i,k\}$ have exactly one common element. Represent such edge by a 
triangle $ijk$. Color its sides $ij$ and $ik$ (corresponding to the vertices 
of $G$) black, and the third side $jk$, red (they are shown as sold and the 
dashed lines, respectively, in Fig.~\ref{Fg:Bristle}). Thus a graph $G$ is 
represented by $2$-dimensional polyhedron (call it $H$) with triangular 
faces and edges colored red and black so that every face has two black 
sides and one red side. The black edges of $H$ form a graph denoted by 
$\name{ess}_1(H)$ and called an essential $1$-skeleton of $H$. 

If $G$ consists of connected components $G_1, \dots, G_s$ then the 
corresponding subpolyhedra $H_1, \dots, H_s$ of $H$ are edge-disjoint but 
not necessarily vertex-disjoint; thus, $H$ is reducible, and $H_1, \dots, 
H_s$ are its irreducible components. By Theorem \ref{Th:Main} every $G_i$ 
is either an oriented cycle or an oriented chain. 

Let first $G_i$ be an oriented cycle. In the corresponding $H_i$ every 
black edge belongs to two faces and every red edge, to one face. Hence 
$H_i$ is a nodal surface with boundary where every face has two internal 
sides and one boundary side. An orientation of an edge of $G_i$ gives rise 
to an orientation of the corresponding face of $H_i$; since the 
orientations of the edges in a cycle $G_i$ are compatible, so are 
orientations of the faces in $H_i$. The graph $G_i$ is connected, so the 
polyhedron $H_i$ is irreducible. Hence, $H_i$ is a cycle polyhedron. 
Conversely, if $H_i$ is a cycle polyhedron, then $G_i$ is connected and 
every its vertex has valency $2$ --- hence, $G_i$ is a cycle.

In a similar manner one proves that $G_i$ is an oriented chain if and only 
if $H_i$ is a chain polyhedron.

 \begin{proof}[of Lemma \ref{Lm:CycleChain}]
Let $H$ be a cycle polyhedron with the vertices $1, 2, \dots, n$. Build the 
graph $G$ such that the vertices of $G$ are internal edges of $H$ (elements 
of $\name{ess}_1(H)$), and two vertices are joined by an edge if the 
corresponding edges belong to a face. 

As proved before, the graph $G$ is an oriented cycle where every vertex is 
marked by two indices $(i,j)$, and the neighbouring vertices have exactly 
one common index. For every index $i$ denote by $K_i$ the set of vertices 
in $G$ containing $i$ as one of the indices. If for some $i$ the 
corresponding $K_i$ contains all the vertices, then the second indices $j$ 
in all the vertices are pairwise distinct, and we have a disk described in 
clause \ref{It:Disk} of the lemma. From now on suppose that for every index 
$i$ there is at least one vertex of $G$ {\em not} containing it.

Every $K_i$ is a union of several ``solid arcs'' $K_{i,1}, \dots, K_{i,s_i}$ 
(a solid arc is one vertex or a sequence of several successive vertices 
in a cycle). Consider an auxiliary graph $G'$ where for all $i$ and 
$p = 1, \dots, s_i$ the index $i$ in the vertices in $K_{i,p}$ is renamed 
into a new index $i_p$. Thus, the polyhedron $H$ is obtained from the 
polyhedron $H'$ corresponding to $G'$ by identification of some vertices. 
Thus it is enough to prove that if every $K_i$ is one solid arc (but not 
the whole cycle) then the polyhedron $H$ is an annulus or a Moebius band.

Let $G$ contain $m$ vertices. Consider an auxiliary circle $S^1$ with $m$ 
equidistant points $a_1, \dots, a_m$ on it. Let $K_i$ be a solid arc 
covering the vertices $p, p+1, \dots, p+q$. Define a subset $L_i \subset 
S^1$ as an arc from $a_p$ to $a_{p+q}$, including both endpoints. Every two 
neighbouring vertices in $G$ have a common index $i$, so every point of 
every arc $[a_p,a_{p+1}]$ belongs to some $L_i$, hence $\bigcup L_i = S^1$. 
Triple intersections of different $L_i$s are all empty; the intersections 
$L_i \cap L_j$ are either empty or one point. There are exactly $m$ pairs 
with nonempty intersection, because $L_i$ are arcs. Thus, the Euler 
characteristics is
 \begin{equation*}
0 = \chi(S^1) = \chi(\bigcup L_i) = \sum_i \chi(L_i) - \sum_{i,j} \chi(L_i 
\cap L_j).
 \end{equation*}
One has $\chi(L_i) = 1$ for all $i$ and $\chi(L_i \cap L_j) = 1$ for $m$ 
pairs $i,j$ where the intersection is nonempty, and $\chi(L_i \cap L_j) = 
0$ otherwise. So, the number of indices $i$, that is, the number of 
vertices of the polyhedron $H$, is equal to $m$. The total number of its 
faces is the number of edges in $G$, that is, $m$. Also $H$ contains $m$ 
red edges (one per face) and $m$ black edges (corresponding to the 
vertices of $G$). Thus $\chi(H) = m-2m+m = 0$, and $H$ is either an annulus 
or a Moebius band.

The proof in the chain case is similar.
 \end{proof}

So, summation in the right-hand side of Theorem \ref{Th:Main} for the 
$\name{char}_M(t)$ is performed over the set of nodal sufraces with 
boundary $H = H_1 \cup \dots \cup H_m$ where the irreducible components 
$H_1, \dots, H_s$ are either cycle polyhedra or chain polyherda. 

The weight $W_P(H)$ of a polyhedron is equal to the product of the weights 
of the faces. The weight of a triangular face $pqr$ where $pq$ is the 
first internal edge, $pr$, the second, and $qr$, a boundary edge, is equal 
to $c_{pqr} \langle \alpha_{[pq]}, e_{[pr]}\rangle$. So, the second factor 
is equal to $1$, $-\phi_{pq}$, $-\phi_{pr}$ or $\phi_{pq} \phi_{pr}$ 
depending on how the edges $pq$ and $pr$ are directed. The weight of $H$ is 
$W_P(H) = W_P(H_1) \dots W_P(H_m)$. 

Let $H_1, \dots, H_s$ be chain polyhedra with the initial edges $I_1(H), 
\dots, I_s(H)$ and the terminal edges $T_1(H), \dots, T_s(H)$, and 
$H_{s+1}, \dots, H_m$ be cycle polyhedra. The determinant in 
\eqref{Eq:Main} is equal to $\det G(A_{H^I},E_{H^T})$ where $H^I \bydef 
\name{ess}_1(H) \setminus \{I_1(H), \dots, I_s(H)\}$ and $H^T \bydef 
\name{ess}_1(H) \setminus \{T_1(H), \dots, T_s(H)\}$. In particular, if the 
determinant is nonzero, then $e_i, i \ne I_1(H), \dots, I_s(H)$, as well as 
$\alpha_j, j \ne T_1(H), \dots, T_s(H)$, are linearly independent. 
According to Section \ref{SSec:MTree}, this implies that the graphs $H^I$ 
and $H^T$ are mixed forests. 

Thus, equation \eqref{Eq:Main} for the case considered looks as follows: if 
$M$ is the level 2 Laplacian then $\name{char}_M(t) = \sum_{k=0}^n (-1)^k 
\mu_k t^k$ where
 \begin{equation}\label{Eq:Level2Interm}
\mu_k = \sum_{\substack{H \in \name{\mathcal{CP}}_{n,k}\\H^I \text{ and } 
H^T \text{ are}\\ \text{mixed forests}}} \prod_{\substack{\Phi \text{ is}\\ 
\text{a face of }H, }} c_{v_3(\Phi)v_1(\Phi)v_2(\Phi)} \langle 
\alpha_{[s_1(\Phi)]}, e_{[s_2(\Phi)]}\rangle \times \det G(A_{H^I},E_{H^T})
 \end{equation}

The determinantal term in \eqref{Eq:Level2Interm} can be simplified using 
Lemma \ref{Lm:OrthComp}:
 \begin{align}
(\det G(A_{H^I},E_{H^T}))^2 &= \angle(\langle A_{H^I}\rangle, \langle 
E_{H^T}\rangle) \cdot \det G(A_{H^I},A_{H^I}) \det 
(E_{H^T},E_{H^T})\nonumber \\ 
&= \angle(\langle A_{H^I}\rangle^\perp, \langle E_{H^T}\rangle^\perp) \cdot 
\det G(A_{H^I},A_{H^I}) \det (E_{H^T},E_{H^T})\nonumber\\
&\begin{aligned}
&=\frac{\det G(A_{H^I}',E_{H^T}')^2}{\det G(A_{H^I}',A_{H^I}') \det 
G(E_{H^T}',E_{H^T}')} \\
&\hphantom{\det G(A_{H^I}',A_{H^I}')}\times \det G(A_{H^I},A_{H^I}) \det 
(E_{H^T},E_{H^T});
 \end{aligned}\label{Eq:Dual}
 \end{align}
here $\langle X\rangle$ mean the subspace in $\Real^n$ spanned by $X$, 
${}^\perp$ means an orthogonal complement, and $A_{H^I}'$, $E_{H^T}'$ are 
bases in $\langle A_{H^I}\rangle^\perp$ and $\langle E_{H^T}\rangle^\perp$, 
respectively.

By Lemma \ref{Lm:OrthComp} the formula above is true for any choice of the 
bases $A_{H^I}'$, $E_{H^T}'$; below we describe orthogonal bases the most 
convenient for our purposes. For any graph $G$ denote by $\name{\mathcal 
C}(G)$ the set of its connected components. Let $\name{\mathcal C}(H^I) = 
\{H^I_1, \dots, H^I_s\}$ and $\name{\mathcal C}(H^T) = \{H^T_1, \dots, 
H^T_t\}$; denote by $V_i^I \subset \Real^n$ and $V_i^T \subset \Real^n$ the 
subspaces spanned by the vertices of $H^I_i$ and $H^T_i$, respectively. 
Then 
 \begin{equation*}
\langle A_{H^I}\rangle^\perp = \bigoplus_{i=1}^s \langle 
A_{H^I_i}\rangle^{\perp,V_i^I},
 \end{equation*}
where the summands are pairwise orthogonal; the same is true for $E_{H^T}$.

Every $H^I_i$ and $H^T_i$ is either a tree or a graph with one cycle; 
choose a root in every tree component and denote it by $r_i^\alpha$ and 
$r_i^e$, respectively.

 \begin{lemma} \label{Lm:OCAlpha}
If $H$ is a tree with a root $r$, then the spaces $\langle 
A_H\rangle^\perp$ and $\langle E_H\rangle^\perp$ have dimension $1$ and are 
spanned by vectors $b_H \bydef \sum_u u\phi_{ur}$ and $f_H \bydef \sum_u 
u\phi_{ru}$, respectively, where the summation is over the set of vertices 
of $H$, and $ur$, $ru$ are paths joining $u$ with $r$ and $r$ with $u$.

If $H$ is a graph with one cycle with monodromy $w \ne 1$ then $\langle 
A_H\rangle = \langle E_H\rangle = \Real^n$.
 \end{lemma}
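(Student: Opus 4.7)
The plan is to handle the two cases separately, relying in each case on one of the determinant computations from Section \ref{SSec:Tech} for the dimension count, and on an explicit concatenation identity for the path transports $\phi_\Lambda$ to identify the spanning vectors.

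For the tree case, let $m = |E(H)|$, so $H$ has $m+1$ vertices and $V_H \bydef \langle u_v : v \in V(H)\rangle$ has dimension $m+1$. First, Lemma \ref{Lm:QTree} gives $\det Q_H = m+1 \ne 0$; since $Q_H = G(A_H, E_H)$, any linear relation among the $\alpha_{ij}$ (or the $e_{ij}$) would force a row (resp.\ column) dependence of $Q_H$. Hence $\dim\langle A_H\rangle = \dim\langle E_H\rangle = m$, so the complement of each inside $V_H$ is one-dimensional. Next I will exhibit $b_H$ as the spanning vector of $\langle A_H\rangle^\perp$: it is nonzero since its coefficient at $u_r$ equals $\phi_{rr} = 1$ (empty product), and for any edge $(i,j)$ of $H$ I plan to compute
\begin{equation*}
\langle b_H, \alpha_{ij}\rangle = \phi_{ir} - \phi_{ij}\,\phi_{jr}
\end{equation*}
and dispatch by a case analysis on the rooted tree structure: exactly one of $i,j$ is the parent of the other. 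If $j$ is the parent of $i$, the unique path from $i$ to $r$ begins with the edge $(i,j)$, so $\phi_{ir} = \phi_{ij}\phi_{jr}$ by concatenation. If $i$ is the parent of $j$, then the path from $j$ to $r$ begins with $(j,i)$, so $\phi_{jr} = \phi_{ji}\phi_{ir} = \phi_{ij}^{-1}\phi_{ir}$, whence $\phi_{ij}\phi_{jr} = \phi_{ir}$. Either way the inner product vanishes. The assertion for $f_H$ and $E_H$ follows by precisely the same argument after replacing every $\phi_{xy}$ with $\phi_{yx}$ (equivalently, reversing all paths), because $e_{ij}$ differs from $\alpha_{ij}$ only by this substitution.

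For the cycle case, $H$ has $n$ vertices and $n$ edges, so $V_H = \Real^n$. Lemma \ref{Lm:PQRCycle} yields
\begin{equation*}
\det P_H = (1-w)^2 \prod_{i,j} \phi_{ij}^2, \qquad \det R_H = (1-w^{-1})^2 \prod_{i,j} \phi_{ij}^{-2},
\end{equation*}
both of which are nonzero under the hypothesis $w \ne 1$. Hence $A_H$ and $E_H$ are each families of $n$ linearly independent vectors in an $n$-dimensional space, so $\langle A_H\rangle = \langle E_H\rangle = \Real^n$ as claimed.

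I anticipate no serious obstacle: the only moderately delicate step is the orthogonality check for $b_H$, but this reduces cleanly to the concatenation identity $\phi_{ir} = \phi_{ij}\phi_{jr}$ (or its inverse form) that is built into the definition of a line bundle with connection; the dimension counts and the cycle case are immediate from the determinantal lemmas already proved.
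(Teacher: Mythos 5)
Your proof is correct and follows essentially the same route as the paper's: the dimension count via $\det Q_H = m+1 \ne 0$ (Lemma \ref{Lm:QTree}) in the tree case and via Lemma \ref{Lm:PQRCycle} in the one-cycle case, plus the orthogonality check $(b_H,\alpha_{ij})=\phi_{ir}-\phi_{ij}\phi_{jr}=0$. The only difference is that you spell out the concatenation argument for this last identity, which the paper dismisses with ``apparently''.
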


 \begin{proof}
Let $H$ be a tree. The spaces $\langle A_H\rangle$ and $\langle E_H\rangle$ 
are spanned by $n-1$ vectors in $\Real^n$; the vectors are linearly 
independent by Lemma \ref{Lm:QTree}. Hence, $\dim \langle A_H\rangle^\perp 
= \dim \langle E_H\rangle^\perp = 1$. Apparently, $(b_H,\alpha_{ij}) = 0$ 
and $(f_H, e_{ij}) = 0$ for any edge $ij$ of $H$, and the first statement 
follows.

The second statement follows from Lemma \ref{Lm:PQRCycle}.
 \end{proof}

The number of rows and columns of the matrix $G(A_{H^I}',E_{H^T}')$ is 
equal to the number of tree components of the graphs $H^I$ and $H^T$. If 
$H_i^I$ is a tree component of $H^I$ and $H_j^T$ is a tree component of 
$H^T$, then, obviously, $G(A_{H^I}',E_{H^T}')_i^j = (b_{H_i^I}, f_{H_j^T}) 
= \sum_{\Lambda \in L_{ij}} \phi_\Lambda^2 n_\Lambda = M(H)_i^j$ (recall 
that $L_{ij}$ is the set of paths joining $r_j^e$ with $r_i^\alpha$ and 
$n_\Lambda$ is the number of vertices along the path $\Lambda$ that belong 
both to $H_i^I$ and $H_j^T$).

Lemmas \ref{Lm:OCAlpha} and \ref{Lm:DetAA} imply now that
 \begin{align*}
\det G(A_{H^I},E_{H^T}) &= \det M(H)
\times \prod_{i=1}^s \prod_{\substack{(pq) \in H^I_i \text{ looks}\\ 
\text{away from } r_i^\alpha}} \phi_{pq} \times \prod_{j=1}^s 
\prod_{\substack{(pq) \in H^T_j \text{ looks}\\ \text{away from } 
r_j^e}} \phi_{qp} \\
&= \det M(H) \times \prod_{i,j=1}^s \prod_{\substack{(pq) \in H \text{ 
lies}\\ \text{in a path joining}\\ r_i^\alpha \text{ with } r_j^e}} 
\phi_{pq},
 \end{align*}
and Theorem \ref{Th:DetLevel2} is proved.

\section*{Acknowledgements}

During the work the autor was generously supported by the RFBR grants 
08-01-00110-  ``Geometry and combinatorics of mapping spaces for real and 
complex curves'' and NSh-4850.2012.1 ``Singularities theory and its 
applications'', by the Higher School of Economics (HSE) Scientific 
foundation grant 10-01-0029 ``Graph embeddings and integrable 
hierarchies'', by the joint RFBR ans HSE grant 09-01-12185-ofi-m 
``Combinatorial aspects of integrable models in mathematical physics'', HSE 
Laboratory of mathematical research TZ 62.0 project ``Math research in 
low-dimensional topology, algebraic geometry and representation theory'', 
and by the FTsP-Kadry grant ``New methods of research of integrable systems 
and moduli spaces in geometry, topology and mathematical physics''.

I would like to thank A.Abdessalam and A.Bufetov for stimulating 
discussions and for introducing me to great examples of research in the 
field.

\end{document}